\newcommand{\nc}{\newcommand}
\nc{\fg}{\mathfrak{f} } \nc{\vg}{\mathfrak{v} } \nc{\wg}{\mathfrak{w} }
\nc{\zg}{\mathfrak{z} } \nc{\ngo}{\mathfrak{n} } \nc{\kg}{\mathfrak{k} }
\nc{\mg}{\mathfrak{m} } \nc{\lgo}{\mathfrak{l} } \nc{\ggo}{\mathfrak{g} }
\nc{\ggob}{\overline{\mathfrak{g}} } \nc{\sog}{\mathfrak{so} }
\nc{\sug}{\mathfrak{su} } \nc{\spg}{\mathfrak{sp} } \nc{\slg}{\mathfrak{sl} }
\nc{\glg}{\mathfrak{gl} } \nc{\cg}{\mathfrak{c} } \nc{\rg}{\mathfrak{r} }
\nc{\hg}{\mathfrak{h} } \nc{\tg}{\mathfrak{t} } \nc{\ug}{\mathfrak{u} }
\nc{\dg}{\mathfrak{d} } \nc{\ag}{\mathfrak{a} } \nc{\pg}{\mathfrak{p} }
\nc{\sg}{\mathfrak{s} } \nc{\affg}{\mathfrak{aff} }\nc{\bg}{\mathfrak{b} }
\nc{\pca}{\mathcal{P}} \nc{\nca}{\mathcal{N}} \nc{\lca}{\mathcal{L}}
\nc{\oca}{\mathcal{O}} \nc{\mca}{\mathcal{M}} \nc{\tca}{\mathcal{T}}
\nc{\aca}{\mathcal{A}} \nc{\cca}{\mathcal{C}} \nc{\gca}{\mathcal{G}}
\nc{\sca}{\mathcal{S}} \nc{\hca}{\mathcal{H}} \nc{\bca}{\mathcal{B}}
\nc{\dca}{\mathcal{D}} \nc{\val}{\operatorname{val}}
\nc{\vp}{\varphi} \nc{\ddt}{\frac{d}{dt}} \nc{\dds}{\frac{d}{ds}}
\nc{\dpar}{\frac{\partial}{\partial t}} \nc{\im}{\mathtt{i}}
\nc{\SO}{\mathrm{SO}} \nc{\Sp}{\mathrm{Sp}} \nc{\Sl}{\mathrm{SL}}
\nc{\SU}{\mathrm{SU}} \nc{\Or}{\mathrm{O}} \nc{\U}{\mathrm{U}} \nc{\Gl}{\mathrm{GL}}
\nc{\Se}{\mathrm{S}} \nc{\Cl}{\mathrm{Cl}} \nc{\Spein}{\mathrm{Spin}}
\nc{\Pin}{\mathrm{Pin}} \nc{\G}{\mathrm{GL}_n(\RR)} \nc{\g}{\mathfrak{gl}_n(\RR)}
\nc{\Span}{\mathrm{Span}}
\nc{\RR}{{\Bbb R}} \nc{\HH}{{\Bbb H}} \nc{\CC}{{\Bbb C}} \nc{\ZZ}{{\Bbb Z}}
\nc{\FF}{{\Bbb F}} \nc{\NN}{{\Bbb N}} \nc{\QQ}{{\Bbb Q}} \nc{\PP}{{\Bbb P}}
\nc{\vs}{\vspace{.2cm}} \nc{\vsp}{\vspace{1cm}} \nc{\ip}{\langle\cdot,\cdot\rangle}
\nc{\ipp}{(\cdot,\cdot)} \nc{\la}{\langle} \nc{\ra}{\rangle} \nc{\unm}{\tfrac{1}{2}}
\nc{\unc}{\tfrac{1}{4}} \nc{\und}{\tfrac{1}{16}} \nc{\no}{\vs\noindent}
\nc{\lam}{\Lambda^2(\RR^n)^*\otimes\RR^n} \nc{\tangz}{{\rm T}^{\rm Zar}}
\nc{\nor}{{\sf n}}  \nc{\mum}{/\!\!/} \nc{\kir}{/\!\!/\!\!/}
\nc{\Ri}{\tfrac{4\Ric_{\mu}}{||\mu||^2}} \nc{\ds}{\displaystyle}
\nc{\ben}{\begin{enumerate}} \nc{\een}{\end{enumerate}} \nc{\f}{\frac}
\nc{\lb}{[\cdot,\cdot]} \nc{\isn}{\tfrac{1}{||v||^2}}
\nc{\gkp}{(\ggo=\kg\oplus\pg,\ip)} \nc{\ukh}{(\ug=\kg\oplus\hg,\ip)}
\nc{\tgkp}{(\tilde{\ggo}=\kg\oplus\pg,\ip)}
\nc{\wt}{\widetilde} \nc{\mm}{M}
\renewcommand{\i}{\mathbf{i}}
\nc{\Hm}{{H}}
\nc{\ad}{\operatorname{ad}}
\nc{\Ad}{\operatorname{Ad}}
 \nc{\End}{\operatorname{End}}
\nc{\Id}{\operatorname{Id}}
\nc{\Der}{\operatorname{Der}} \nc{\Ker}{\operatorname{Ker}}
\nc{\Iso}{\operatorname{I}} \nc{\Diff}{\operatorname{Diff}}
\nc{\tr}{\operatorname{tr}}
\nc{\sen}{\operatorname{sen}}
 \nc{\Cric}{\operatorname{P}} \nc{\Ricci}{\operatorname{Ric}}
\nc{\Sym}{\operatorname{Sym}}
 \nc{\ricci}{\operatorname{Rc}}
 \nc{\Rin}{\operatorname{M}}
\nc{\Spam}{\operatorname{Spam}} \nc{\Diag}{\operatorname{Diag}}
\nc{\Spec}{\operatorname{Spec}}
\theoremstyle{plain}
\newtheorem{theorem}{Theorem}[section]
\newtheorem{proposition}[theorem]{Proposition}
\newtheorem{lemma}[theorem]{Lemma}
\theoremstyle{definition}
\newtheorem{definition}[theorem]{Definition}
\theoremstyle{remark}
\newtheorem{remark}[theorem]{Remark}
\newtheorem{example}[theorem]{Example}
\title[]{Negative Ricci Curvature on some non-solvable Lie groups II}
\author{Cynthia Will}
\address{Universidad Nacional de C\'ordoba, FaMAF and CIEM, 5000 C\'ordoba, Argentina}
\email{cwill@famaf.unc.edu.ar}
\thanks{This research was partially supported by grants from CONICET, FONCYT and SeCyT (Universidad Nacional de C\'ordoba)}
\begin{document}

\maketitle

\begin{abstract} We construct many examples of Lie groups admitting a left-invariant metric of negative Ricci curvature. We study Lie algebras which are semidirect products $\lgo = (\ag \oplus \ug) \ltimes \ngo$ and we obtain examples where $\ug$ is any semisimple compact real Lie algebra, $\ag$ is one-dimensional and $\ngo$ is a representation of $\ug$ which satisfies some conditions. In particular, when $\ug = \sug(m)$, $\sog(m)$ or $\spg(m)$ and $\ngo$ is a representation of $\ug$ in some space of homogeneous polynomials, we show that these conditions are indeed satisfied. In the case $\ug = \sug(2)$ we get a more general construction where $\ngo$ can be any nilpotent Lie algebra where $\sug(2)$ acts by derivations. We also prove a general result in the case when $\ug$ is a semisimple Lie algebra of non-compact type.
\end{abstract}

\section{Introduction}\label{intro}
 In this paper, we are interested in homogeneous negative Ricci curvature, as a continuation of the work started in \cite{u2}.
It is proved there that if $V$ is a non-trivial real representation of $\sug(2)$ extended to $\ug(2)$ by letting the center act as multiples of the identity, then the Lie algebra $\ug(2)\ltimes V$ admits an inner product with negative Ricci curvature.
Before that, the only Lie groups in the literature that were known to admit a left-invariant metric with negative Ricci curvature were either semisimple (see \cite{DL}, \cite{DLM}) or solvable (see \cite{D}, \cite{NN}, \cite{N}, \cite{LD}). We refer to \cite{NN}, \cite{u2} or \cite{LD} for a more detailed summary of the known results on negative Ricci curvature in the homogeneous case.

In this work, we extend the results in \cite{u2} in many ways, finding families of examples of Lie groups admitting a left-invariant metric with negative Ricci curvature. We construct Lie algebras as semidirect products $\lgo = (\ag \oplus \ug) \ltimes \ngo$, where $\ug$ is a semisimple Lie algebra, $\ag$ is abelian and $\ngo$ is a nilpotent Lie algebra.

First, we consider any compact semisimple Lie algebra $\ug$, a one dimensional $\ag$ and a finite dimensional real representation of $\ug$, $(V,\pi)$. We obtain a technical result showing that $\lgo=(\ag \oplus \ug) \ltimes V$ admits an inner product with negative Ricci operator provided $(V,\pi)$ admits a decomposition $V=V_1 \oplus V_2$ such that the action of the root vectors of the complexification of $\ug$ satisfy certain conditions. See Theorem \ref{theTheo} for a detailed statement.

To show that this general procedure actually gives new examples of negatively Ricci curved Lie groups, we study in particular the cases when $\ug$ is the compact real form of a classical simple Lie algebra and $(V,\pi)$ is a canonical representation of $\ug$ on a space of homogeneous polynomials.  We show that, in all these cases, the hypotheses of Theorem \ref{theTheo} are satisfied, obtaining the following results.

\begin{theorem}
  Let $\ug = \sug(m)$, $\sog(m)$  or $\spg(m)$ and let $V= \mathcal{P}_{n}(\CC^r)$ be the standard real representation of $\ug$ on the space of complex homogeneous polynomials of degree $n$ in $r$ variables, where $r=m$ for $\sug(m)$ and $\sog(m)$ and $r=2m$ for $\spg(m)$. If $\lgo = (\RR Z \oplus \ug) \ltimes V$ is the Lie algebra such that $[Z, \ug]=0$ and $Z$ acts as the identity on $V$, then  $\lgo$ admits an inner product with negative Ricci curvature for any $n,m \ge 2$.
\end{theorem}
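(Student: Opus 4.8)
The plan is to deduce the statement directly from Theorem \ref{theTheo}: since $\lgo = (\RR Z \oplus \ug)\ltimes V$ is exactly of the form treated there (with $\ag = \RR Z$ one-dimensional, $\ug$ compact simple, and $V$ abelian with $Z$ acting as the identity), it suffices to exhibit, for each of the three families, a decomposition $V = V_1 \oplus V_2$ of the real representation $V = \mathcal{P}_n(\CC^r)$ for which the required conditions on the root vectors of $\ug_\CC$ hold. I would begin by fixing the standard data: the complexification $\ug_\CC$ equal to $\slg(m,\CC)$, $\sog(m,\CC)$ or $\spg(m,\CC)$, a Cartan subalgebra $\hg$, a system of positive roots, and the corresponding root vectors $E_\alpha$. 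The action of $\ug$ on $\CC^r$ is the defining one and extends to $\mathcal{P}_n(\CC^r)$ by derivations, so the monomials $z^a = z_1^{a_1}\cdots z_r^{a_r}$ with $|a| = n$ form a weight basis and each $E_\alpha$ acts by an explicit first-order operator (for type $A$, $E_{ij}$ acts as $z_i\,\partial_{z_j}$, and analogously in the other types). This reduces everything to a combinatorial statement about how the $E_\alpha$ move weights on the simplex $\{a : |a| = n\}$.

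Next I would write down the decomposition $V = V_1\oplus V_2$ and verify the hypotheses of Theorem \ref{theTheo} on this monomial basis. For $\ug = \sug(m)$ the representation $\mathcal{P}_n(\CC^m)$ is irreducible of highest weight $n\omega_1$, its weight diagram is the full simplex of compositions of $n$ into $m$ nonnegative parts, and the simple root vectors connect weight-adjacent monomials with nonzero coefficients; this transparency is what makes the conditions checkable. For $\ug = \sog(m)$ and $\ug = \spg(m)$ I would instead invoke the classical harmonic (resp. symplectically harmonic) decomposition $\mathcal{P}_n = \bigoplus_{j\ge 0} q^{\,j}\,\mathcal{H}_{n-2j}$, where $q$ is the invariant quadratic (resp. symplectic) form and $\mathcal{H}_k$ is the space of harmonic polynomials of degree $k$, so that the analysis again reduces to the explicit root action on the highest-weight-generated pieces $\mathcal{H}_k$.

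The main obstacle I expect is the \emph{uniform} verification of the conditions across all three families and all $n,m\ge 2$. The hypotheses of Theorem \ref{theTheo} ultimately translate into inequalities relating the Casimir eigenvalues and the weight multiplicities attached to $V_1$ and $V_2$, and such inequalities are tightest precisely in the smallest cases $n=2$ or $m=2$; moreover the non-simply-laced types $B_\ell$ and $C_\ell$ introduce short roots whose action on $\mathcal{P}_n$ must be controlled separately from that of the long roots, while type $D$ requires attention to the two ends of the weight diagram. I therefore anticipate that, after the clean type-$A$ case, the real work lies in a careful case analysis for $\sog(m)$ and $\spg(m)$, bounding the relevant traces over the harmonic pieces $\mathcal{H}_{n-2j}$ and checking that the extremal (highest and lowest) weight contributions do not violate the required estimates.
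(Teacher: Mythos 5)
Your overall strategy---reduce to Theorem \ref{theTheo} by producing a decomposition $V=V_1\oplus V_2$ and checking its hypotheses on a monomial weight basis with explicit first-order root-vector actions---is exactly the paper's strategy, and your type-$A$ setup (monomials as weight vectors, $E_{ij}$ acting as $z_i\partial_{z_j}$) matches the paper's. But there are two genuine gaps. First, you never actually exhibit the decomposition, and that choice is the entire content of the verification: the paper takes $V_1=\Span\{z_k^n,\ \i z_k^n :\, 1\le k\le m\}$, the pure $n$-th powers of the coordinates, and $V_2$ the span of all remaining monomials. This works because for $n\ge 2$ a root vector sends $z_k^n$ to a multiple of $z_k^{n-1}z_j$, which is not a pure power; hence $\pi(X^\alpha)(V_1)\subset V_2$ while $\pi(X^\alpha)|_{V_1}\neq 0$, and the trace-orthogonality condition (iii) is then a short direct computation in this orthonormal monomial basis.

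Second, your plan for $\sog(m)$ and $\spg(m)$---to organize the argument around the harmonic decomposition $\mathcal{P}_n=\bigoplus_j q^{\,j}\mathcal{H}_{n-2j}$---cannot produce the decomposition Theorem \ref{theTheo} needs, because each piece $q^{\,j}\mathcal{H}_{n-2j}$ is a $\ug$-submodule. If $V_1$ were a sum of such pieces, then $\pi(X^\alpha)(V_1)\subset V_1$, so the requirement $\pi(X^\alpha)(V_1)\subset V_2$ would force $\pi(X^\alpha)|_{V_1}=0$, contradicting hypothesis (ii); the decomposition must be manifestly \emph{non}-invariant. What the paper does instead for types $B$, $C$, $D$ is pass to weight-adapted complex coordinates ($z_1=x_1+\i x_2$, $z_2=x_1-\i x_2$, \dots), write the root vectors as explicit matrices acting on these coordinates, and again take $V_1$ spanned by the pure powers $s z_j^n$, $s=1,\i$, exactly as in type $A$. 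Relatedly, your expectation that the hypotheses ``translate into inequalities relating Casimir eigenvalues and weight multiplicities,'' tightest for small $n,m$, misreads Theorem \ref{theTheo}: its hypotheses are exact structural conditions (invariance of $V_1,V_2$ under the $H^\alpha$, root vectors mapping $V_1$ into $V_2$ nontrivially, and trace orthogonality), and all quantitative concerns are absorbed by taking the parameter $\rho$ small in its proof, so no estimates over harmonic pieces or extremal weights are required; the only place the size restriction enters is that $n\ge 2$ is needed for pure powers to be mapped to non-pure-power monomials.
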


We also consider any algebra with Levi factor $\sug(2)$.

\begin{theorem}
  Let $\lgo= (\RR Z \oplus \sug(2)) \ltimes \ngo$ be a Lie algebra where $\ngo$ is any nilpotent Lie algebra and $[Z,\sug(2)]=0$. If $[\sug(2),\ngo] \ne 0$ and $\ad Z$ is a positive multiple of the identity on each $\sug(2)$-irreducible subspace of $\ngo$, then $\lgo$ admits an inner product with negative Ricci curvature.
\end{theorem}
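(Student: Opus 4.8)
The plan is to reduce the general nilpotent situation to the abelian one governed by Theorem \ref{theTheo} (taken with $\ug=\sug(2)$ and $\ag=\RR Z$), and then to transport the resulting metric by a scaling deformation. Since $\sug(2)$ is compact and acts on $\ngo$ by derivations, $\ngo$ is completely reducible as an $\sug(2)$-module; by hypothesis $\ad Z$ is a positive scalar on each irreducible summand and commutes with the $\sug(2)$-action, so $\RR Z\oplus\sug(2)$ acts on the underlying vector space of $\ngo$ as a representation of the required type. Killing the internal bracket of $\ngo$ produces the Lie algebra $\lgo_0=(\RR Z\oplus\sug(2))\ltimes V$ with $V=\ngo$ abelian and nontrivial (since $[\sug(2),\ngo]\neq 0$). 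This is exactly a case of Theorem \ref{theTheo}, whose hypotheses for $\sug(2)$ are the ones established in \cite{u2}; hence there is a fixed inner product $g_0$ on $\lgo_0$ whose Ricci operator $\Ricci(\mu_0,g_0)$ is negative definite.

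Next I would interpolate. For $s\in\RR$ let $\mu_s$ be the bilinear bracket on the fixed vector space $\lgo$ that coincides with the structure of $\lgo$ on $\RR Z\oplus\sug(2)$ and on its action on $\ngo$, but whose restriction to $\ngo\times\ngo$ equals $s$ times the original $\ngo$-bracket. A direct check of the Jacobi identity, using that $\RR Z\oplus\sug(2)$ acts by derivations, shows each $\mu_s$ is a Lie bracket; here $\mu_0$ is the abelian-$\ngo$ algebra $\lgo_0$ above and $\mu_1$ is the original $\lgo$. Moreover, scaling $\ngo$ by a nonzero constant $c$ while fixing $\RR Z\oplus\sug(2)$ is a Lie algebra isomorphism $(\lgo,\mu_c)\to(\lgo,\mu_1)$, so $(\lgo,\mu_s)\cong\lgo$ for every $s\neq 0$.

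Finally I would use continuity and openness. The Ricci operator $\Ricci(\mu_s,g_0)$ depends continuously (indeed polynomially, for the fixed metric $g_0$) on the structure constants $\mu_s$, hence on $s$, while $\Ricci(\mu_0,g_0)$ is negative definite; since negative-definiteness is open among symmetric endomorphisms, $\Ricci(\mu_s,g_0)$ stays negative definite for all sufficiently small $s$. Choosing such an $s_\ast\neq 0$ and transporting $g_0$ through the isomorphism $(\lgo,\mu_{s_\ast})\cong\lgo$ yields an inner product on $\lgo$ with negative Ricci curvature, as desired. The main obstacle is not this soft deformation step but its input: one must verify that the abelian limit $\lgo_0$ genuinely meets the hypotheses of Theorem \ref{theTheo} for an \emph{arbitrary} nontrivial real $\sug(2)$-representation and with $\ad Z$ allowed to take different positive values on different irreducible summands, i.e. one must exhibit the decomposition $V=V_1\oplus V_2$ and check the root-vector conditions in that generality. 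Once this is granted, the only remaining care is that Theorem \ref{theTheo} furnishes \emph{strict} negative-definiteness (leaving room for the perturbation) and that the isomorphism type of $(\lgo,\mu_s)$ is constant for $s\neq 0$.
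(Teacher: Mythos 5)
Your deformation step is essentially the paper's own argument: the family $\mu_s$ you define is precisely the orbit path $\psi_t\cdot\lb$ (with $s=1/t$) that the paper uses, where $\psi_t$ is the identity on $\RR Z\oplus\sug(2)$ and $t\,\Id$ on $\ngo$, and your continuity-plus-openness argument, combined with transporting the metric through the isomorphism $(\lgo,\mu_s)\cong\lgo$ for $s\neq 0$, is exactly the content of Proposition \ref{limit}. So the reduction to the case where $\ngo$ is abelian is sound and matches the paper.

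The genuine gap is in the input you feed into this reduction. You assert that the abelian limit $\lgo_0=(\RR Z\oplus\sug(2))\ltimes V$ is ``exactly a case of Theorem \ref{theTheo}'', but it is not: Theorem \ref{theTheo} (through Lemma \ref{lema0}) requires $\ad Z|_V=\Id$, whereas in your setting $\ad Z$ acts by possibly \emph{different} positive scalars on the different $\sug(2)$-irreducible components of $\ngo$; moreover Theorem \ref{theTheo} presupposes a splitting $V=V_1\oplus V_2$ satisfying conditions i)--iii), and neither you nor this paper verifies those conditions for an arbitrary nontrivial real representation of $\sug(2)$ (the paper checks them only for the polynomial representations of $\sug(m)$, $\sog(m)$ and $\spg(m)$). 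You concede this point at the end (``once this is granted''), but granting it is precisely the remaining mathematical content, so as written the proof is incomplete. The paper closes this step differently: after the degeneration it does not invoke Theorem \ref{theTheo} at all; it decomposes $\ngo$ into $\sug(2)$-irreducibles and follows the proof of the main theorem of \cite{u2}, which was designed exactly for arbitrary nontrivial real $\sug(2)$-representations, adjusting only for the fact that the mean curvature vector changes when $Z$ acts by different scalars on different components (see \cite[Remark 3.13]{u2}). To complete your argument you would either have to carry out that adaptation of \cite{u2} yourself, or prove a strengthened version of Theorem \ref{theTheo} allowing blockwise scalar actions of $Z$ and exhibit the required $V_1\oplus V_2$ decomposition for a general $\sug(2)$-module.
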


Another case where we can apply our method is when one starts with the non-compact dual of $\sug(m)$, $\slg(m, \RR)$. In this case, by Weyl's unitary trick, for each representation of $\sug(m)$ one gets a representation of $\slg(m, \RR)$. In Proposition \ref{Gengl} we show that $(\RR Z \oplus \slg(m, \RR)) \ltimes V$ admits an inner product with negative Ricci curvature for $m\ge 2$, where the space $V$ is the same as in the $\sug(m)$ case, the complex homogeneous polynomials in $m$ variables viewed as real. Although this result comes from a continuous argument, in each case one can actually make explicit the inner product on $\lgo$.

As a generalization of this we consider Lie algebras $(\ag \oplus \rg) \ltimes \ngo$ where $\ngo$ is nilpotent, $\ag$ is abelian and $\rg$ is semisimple of non-compact type, and obtain the following existence result.

\begin{theorem}\label{intronc}
Let $\ggo= (\ag \oplus \rg) \ltimes \ngo$ be a Lie algebra where  $\rg$ is a semisimple Lie algebra with no compact factors, $\ngo$ is nilpotent and $[\ag, \ag \oplus \rg]=0$. If in addition,
\begin{enumerate}
  \item  $\ngo$ admits an inner product such that $\ad A|_\ngo$ are normal operators for any $A \in \ag$,
  \item no $\ad A|_\ngo$ has all its eigenvalues purely imaginary,
  \item there exists an element $A_0$ in $\ag$ such that all the eigenvalues of $\ad A_0|_\ngo$ have positive real parts,
  \item  $\rg$ admits an inner product with orthogonal Cartan decomposition and with $\Ricci <0$,
\end{enumerate}
then $\ggo$ admits an inner product with negative Ricci curvature.
\end{theorem}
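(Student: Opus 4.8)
The plan is to choose the inner product as an orthogonal sum adapted to the decomposition $\ggo=\ag\oplus\rg\oplus\ngo$ and to reduce, by a scaling argument, to the case in which $\ngo$ is abelian; in that limit the Ricci operator becomes block diagonal, and each of the four hypotheses controls exactly one feature of the computation. First I would fix the inner product from (1) on $\ngo$, the one from (4) on $\rg$, make the three summands orthogonal, and rescale the $\ngo$-part by $t^2$. Moving the metric to the bracket in the usual way, as $t\to\infty$ the structure constants internal to $\ngo$ tend to $0$ while the brackets of $\ag\oplus\rg$ and the action of $\ag\oplus\rg$ on $\ngo$ are unchanged, so the rescaled brackets converge to the bracket of $(\ag\oplus\rg)\ltimes V$, where $V$ denotes $\ngo$ made abelian. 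Since the Ricci operator depends continuously on the bracket for a fixed inner product, and since (1)--(4) involve only the (scale-invariant) action of $\ag$ and $\rg$ on $\ngo$ and the metric on $\rg$, it suffices to produce a negative definite Ricci operator in this abelian limit: negative definiteness is open, so it then persists for all large $t$ and yields a metric with $\Ricci<0$ on the original $\ggo$.

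Assume then $\ngo=V$ abelian, write $\ad s|_V$ for the action of $s\in\ag\oplus\rg$, and use the standard formula $\Ricci=\Rin-\tfrac12 B-S(\ad H)$, where $\langle BX,Y\rangle=\tr(\ad X\,\ad Y)$, $H$ is the mean curvature vector, and $S(T)=\tfrac12(T+T^t)$. One checks that $\tr\ad X=0$ for $X\in\rg$ (semisimplicity) and for $X\in\ngo$ (nilpotency), so $H\in\ag$ is determined by $\langle H,A\rangle=\tr(\ad A|_V)$. Since $\tr(\ad A_0|_V)>0$ by (3), this functional is nonzero, and I can choose the inner product on $\ag$ so that $H=c\,A_0$ for a prescribed $c>0$. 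Finally I would fix the inner product on $V$ to be adapted to the Cartan decomposition $\rg=\kg\oplus\pg$ of (4), i.e.\ so that $\ad k|_V$ is skew-symmetric for $k\in\kg$ and $\ad p|_V$ is symmetric for $p\in\pg$; such a metric exists by Weyl's unitary trick, as a $K$-invariant inner product coming from the compact dual.

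With these choices I expect $\Ricci$ to be block diagonal with respect to $\ag\oplus\rg\oplus V$ and negative definite on each block. On $V$, the contribution $\tfrac12\sum_s[\ad s|_V,(\ad s|_V)^t]$ coming from $\Rin$ (sum over an orthonormal basis of $\ag\oplus\rg$) vanishes term by term: $\ad A|_V$ is normal for $A\in\ag$ by (1), and $\ad s|_V$ is skew or symmetric for $s\in\kg$ or $\pg$. Hence $\Ricci|_V=-S(\ad H|_V)=-c\,S(\ad A_0|_V)$, which is negative definite because the normal operator $\ad A_0|_V$ has all eigenvalues with positive real part by (3), so its symmetric part is positive definite. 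On $\ag$ one finds $\langle\Ricci A,A\rangle=-\tfrac12\|\ad A|_V\|^2-\tfrac12\tr\big((\ad A|_V)^2\big)=-\sum_j(\Re\lambda_j(A))^2$, where $\lambda_j(A)$ are the eigenvalues of $\ad A|_V$; this is negative definite precisely by (2). On $\rg$ the block equals $\Ricci_\rg$ (the Ricci operator of the metric in (4), for which $H_\rg=0$) plus a correction $P-\tfrac12 C$ produced by the action on $V$, with $\langle(P-\tfrac12C)X,X\rangle=-\tfrac12\|\ad X|_V\|^2-\tfrac12\tr\big((\ad X|_V)^2\big)$; the Cartan adaptation makes this vanish on $\kg$ and equal $-\|\ad p|_V\|^2$ on $\pg$, hence negative semidefinite, so the block is $\le\Ricci_\rg<0$.

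The hard part, I expect, is the vanishing of the three off-diagonal blocks and the negativity of the $\rg$-correction, that is, controlling the interaction between the actions of $\rg$ and of $\ag$ on $V$. These rest on showing $\tr(\ad A|_V\,\ad X|_V)=0$ and $\tr\big((\ad A|_V)^t\,\ad X|_V\big)=0$ for $A\in\ag$, $X\in\rg$: the first follows from $[\ag,\rg]=0$ together with semisimplicity (a representation has vanishing trace on $[\rg,\rg]=\rg$), and the second additionally uses that normality of $\ad A|_V$ forces $(\ad A|_V)^t$ to commute with $\ad X|_V$ as well, after which the same semisimplicity argument applies. Combined with the Cartan-adapted metric on $V$, these identities make all cross blocks vanish and render the $\rg$-block negative definite. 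Assembling the three negative definite blocks completes the abelian case and, via the scaling reduction of the first step, the theorem.
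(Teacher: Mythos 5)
Your proposal follows the same route as the paper's proof of Theorem \ref{ssnc}: scale $\ngo$ to reduce, via Proposition \ref{limit}, to the case of an abelian $V$, then make the Ricci operator of $(\ag\oplus\rg)\ltimes V$ block diagonal with respect to an adapted metric and check each block. Most of the blocks are handled as in the paper, and your treatment of the $\ag$--$\rg$ cross block (the functional $X\mapsto\tr(\ad A|_V\,\ad X|_V)$ vanishes on $[\rg,\rg]=\rg$ because $\ad A|_V$ commutes with the $\rg$-action, plus Fuglede's theorem for the transposed trace) is a clean variant of the paper's argument, which instead diagonalizes the commuting family $S(\ad A_i|_V)$ simultaneously and uses that $\ad Y|_V$ preserves the joint eigenspaces. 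However, there is one genuine gap, and it sits exactly at the crux of the paper's proof.

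You first fix the inner product on $\ngo$ given by hypothesis (1), but later you replace it by a metric on $V$ adapted to the Cartan decomposition, produced by Weyl's unitary trick, and from then on you use normality of $\ad A|_V$ ``by (1)'' \emph{with respect to this new metric}. That is a non sequitur: (1) gives normality for \emph{some} inner product, while Weyl's trick gives Cartan-adaptedness for some \emph{a priori different} inner product, and nothing you say produces a single metric with both properties. Yet every remaining step needs both simultaneously: the vanishing of $[\ad A_i|_V,(\ad A_i|_V)^t]$ and of $[\ad Y_i|_V,(\ad Y_i|_V)^t]$ in $\Ricci|_V$, the identity $-\tr S(\ad A|_V)^2=-\sum_j\bigl(\Re\lambda_j(A)\bigr)^2$ on the $\ag$-block, the positive definiteness of $S(\ad A_0|_V)$, and the Fuglede step itself. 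The paper devotes a separate argument to precisely this compatibility: it averages the hermitian extension of the metric from (1) over the compact group $R_1$ with Lie algebra $\kg\oplus\i\pg$ (see (\ref{innerpro})), and uses $[\ag,\rg]=0$ together with the Fuglede-type observation that $(\ad A)^t$ (transpose in the original metric) also commutes with the $R_1$-action, to conclude that averaging does not change the transpose of $\ad A$, so normality survives while $\kg$ becomes skew-symmetric and $\pg$ symmetric. Since you already invoke Fuglede elsewhere, this repair is within reach, but as written your proposal asserts rather than proves the existence of the compatible metric on which the whole computation rests.
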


In \cite{DLM} it is proved that most of the non-compact simple Lie algebras admits an inner product satisfying (4). Therefore, starting from there, this theorem
gives a very large family of new examples.

It is worthwhile to remark that we provide in this paper many new examples of Lie groups admitting a Ricci negative metric which are not homeomorphic to any of the previously known examples in the literature.

\

{\bf Acknowledgements.} I wish to thank M. Jablonski for many useful comments and J. Lauret for very fruitful conversations on the topic of the paper. I am also very grateful to the referee for many suggestions which certainly improved the presentation and some results of the paper.

\section{Preliminaries and notation}

\subsection{Lie algebras}

We recall some background from \cite{u2} we will need along the paper. Let $\ggo=(\RR^m, \lb )$ be a Lie algebra of dimension $m$, that is, the underlying linear space of $\ggo$ is (identified with) $\RR^m$ and $\lb$ belongs to the space of Lie brackets $\mathcal{L}_m \subset \Lambda^2 (\RR^m)^* \otimes \RR^m$, defined as
$$
\mathcal{L}_m:=\{\mu:\RR^{m}\times\RR^{m}\to \RR^{m} : \mu\; \mbox{bilinear,
skew-symmetric and satisfies Jacobi}\}.
$$
$\mathcal{L}_m$ is also called the variety of Lie algebras of dimension $m$.
We consider the following action of $\Gl_m(\RR)$ on $\mathcal{L}_m:$
$$
(g \cdot\mu)(X,Y)= g \mu(g^{-1}X,g^{-1}Y), \qquad g \in \Gl_m(\RR),\; \mu\in \mathcal{L}_m,\; X,Y \in \ggo.
$$
Note that $(\RR^m, \mu)$ is isomorphic to $(\RR^m, g\cdot\mu)$ for any $g \in \Gl_m(\RR)$, though, $(\RR^m, \mu)$ is not isomorphic to $(\RR^m, \mu_o)$ for $\mu_o$ in the boundary of the orbit $\Gl_m(\RR)\cdot\mu$. Since $\mathcal{L}_m \subset  \Lambda^2 (\RR^m)^* \otimes \RR^m$  is defined by polynomials equations, any $\mu_0$ in the closure is also a Lie bracket. We will say that $\mu_o$ is a {\it degeneration} of $\mu$ or that $\mu$ degenerates to $\mu_o$ if $\mu_o \, \in \, \overline{\Gl_m(\RR)\cdot\mu}.$
Note that by continuity, many of the properties of $\mu_o$ are shared by $\mu.$ In particular if $(\RR^m, \mu_o)$ admits a metric with negative (or positive) sectional or Ricci curvature, so does $(\RR^m, \mu)$ (see \cite[Remark 6.2]{lowsol} or \cite[Proposition 1]{NN}).

 \begin{proposition}\label{limit}
  Suppose $\mu,\, \lambda \in \mathcal{L}_m$ and that $\lambda$ is in the closure of the orbit $\Gl_m(\RR)\cdot\mu.$ If the Lie algebra $(\RR^m, \lambda)$ admits an inner product of negative Ricci curvature, then so does the Lie algebra $(\RR^m, \mu).$
\end{proposition}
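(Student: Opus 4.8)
The plan is to reduce the statement to a continuity-and-openness argument inside the variety $\mathcal{L}_m$, exploiting the standard correspondence between varying the inner product and acting by $\Gl_m(\RR)$ on the bracket. Fix once and for all the canonical inner product $\langle\cdot,\cdot\rangle$ on $\RR^m$. The first step is to recall that for any bracket $\nu\in\mathcal{L}_m$ and any $g\in\Gl_m(\RR)$, the metric Lie algebra $(\RR^m,\nu)$ equipped with the inner product $\langle g\,\cdot\,,g\,\cdot\,\rangle$ is isometric (as a metric Lie algebra, hence with the same curvature invariants) to $(\RR^m,g\cdot\nu)$ equipped with the fixed $\langle\cdot,\cdot\rangle$. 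Consequently, $(\RR^m,\nu)$ admits an inner product of negative Ricci curvature if and only if there exists $g\in\Gl_m(\RR)$ such that the Ricci operator of $(\RR^m,g\cdot\nu)$ with respect to $\langle\cdot,\cdot\rangle$ is negative definite.

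The second ingredient is that the assignment $\nu\mapsto\Ricci(\nu)$, sending a bracket to its Ricci operator with respect to the fixed $\langle\cdot,\cdot\rangle$, is a continuous (indeed polynomial) map $\mathcal{L}_m\to\Sym(\RR^m)$; this follows at once from the usual coordinate formula for the Ricci operator in terms of the structure constants. I would then invoke the hypothesis: since $(\RR^m,\lambda)$ carries a negative-Ricci inner product, the first step yields $h\in\Gl_m(\RR)$ with $\Ricci(h\cdot\lambda)$ negative definite. Because $\lambda\in\overline{\Gl_m(\RR)\cdot\mu}$, I choose $g_k\in\Gl_m(\RR)$ with $g_k\cdot\mu\to\lambda$; by continuity of the action, $(hg_k)\cdot\mu=h\cdot(g_k\cdot\mu)\to h\cdot\lambda$.

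The final step is the openness argument. Negative definiteness is an open condition on $\Sym(\RR^m)$, so by continuity of $\Ricci$ the operators $\Ricci((hg_k)\cdot\mu)$ are negative definite for all sufficiently large $k$. Fixing such a $k$ and setting $g=hg_k\in\Gl_m(\RR)$, the bracket $g\cdot\mu$ has negative-definite Ricci operator with respect to $\langle\cdot,\cdot\rangle$; reversing the correspondence of the first step exhibits a concrete inner product on $(\RR^m,\mu)$, namely the pullback of $\langle\cdot,\cdot\rangle$ under $g$, of negative Ricci curvature, which is exactly the desired conclusion.

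The only delicate point is that $\lambda$ lies in the \emph{closure} of the orbit but possibly not in the orbit itself, so one cannot simply transport the metric from $\lambda$ back to $\mu$ by an isomorphism; this is precisely why the combination of the polynomial dependence of the curvature on the bracket with the openness of the negative-definite cone is essential. I therefore expect the main technical obstacle to be the careful verification of the isometry in the first step and of the continuity of $\Ricci$, although both are by now standard and are already implicit in the references cited just before the statement.
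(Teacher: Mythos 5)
Your argument is correct and is essentially the paper's own approach: the paper justifies Proposition~\ref{limit} by exactly this continuity reasoning (the correspondence~(\ref{chBasis}) between inner products on $(\RR^m,\mu)$ and points of the orbit $\Gl_m(\RR)\cdot\mu$, the polynomial dependence of $\Ricci$ on the structure constants for the fixed inner product, and the openness of the negative-definite cone), deferring the details to the cited references. Nothing in your write-up deviates from or adds a gap to that argument, so it can stand as a proof of the proposition.
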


Moreover, if we fix an inner product on $\ggo=(\RR^m, \mu)$, or directly, an orthonormal basis,  then
the orbit $\Gl_m(\RR) \cdot \mu$  parameterizes, from a different point of
view, the set of all inner products on $\ggo$. Indeed,
\begin{equation}\label{chBasis}
(\ggo,g \cdot \mu,\ip) \mbox{ is isometric to }(\ggo,\mu,\la g\cdot,g\cdot\ra) \; \mbox{ for any }
g\in\Gl(\ggo).
\end{equation}

Let $(\ggo, \lb, \ip)$ be a metric Lie algebra  and $\Hm\in\ggo$ the only element such that $\la
\Hm,X\ra=\tr{\ad{X}}$ for any $X\in\ggo$, usually called the mean curvature vector, and let $B$ denote the symmetric map defined by
the Killing form of $(\ggo, \lb)$ (i.e. $\la BX,X\ra=\tr{(\ad{X})^2}).$ The Ricci operator of $(\ggo, \lb, \ip)$
is given by (see for instance \cite{B} Corollary 7.38):
\begin{equation}\label{ricci}
\Ricci=M-\unm B-S(\ad{\Hm}),
\end{equation}

\noindent where, $S(\ad{\Hm})=\unm(\ad{\Hm}+(\ad{\Hm})^t)$ is the symmetric part of $\ad{\Hm}$
and $M$ is the symmetric operator defined by
\begin{equation}\label{R}
\la MX,X\ra=-\unm\sum\la [X,X_i],X_j\ra^2 +\unc\sum\la
[X_i,X_j],X\ra^2,\qquad\forall X\in\ggo,
\end{equation}

\noindent where $\{ X_i\}$ is any orthonormal basis of $(\ggo, \ip)$. Note that if $\ggo$ is nilpotent, then $\Ricci=M$.

If $\ggo$ is a metric solvable Lie algebra and we consider an
orthogonal decomposition
\begin{equation}\label{solvdecomp}
  \ggo=\ag\oplus\ngo,
\end{equation}
where $\ngo$ is the nilradical of $\ggo$ (i.e. maximal nilpotent ideal), the expression of $\Ricci$ is much simpler when $\ag$ is abelian (see \cite{solvsolitons} (25)). Indeed, we get
\begin{equation}\label{ricsol}
\begin{array}{l}
\la\Ricci A,A\ra =  -\tr{S(\ad{A}|_{\ngo})^2}, \\ \\
\la\Ricci A,X\ra = -\unm\tr{(\ad{A}|_{\ngo})^t\ad{X}|_{\ngo}}, \\ \\
\la\Ricci X,X\ra = -\unm\sum\la [X,X_i],X_j\ra^2 +\unc\sum\la [X_i,X_j],X\ra^2 \\ \\
\qquad  \qquad \qquad +\unm\sum\la [\ad{A_i}|_{\ngo},(\ad {A_i}|_{\ngo})^t]X,X\ra -\la[\Hm,X],X\ra,
\end{array}
\end{equation}

\noindent for all $A\in\ag$ and $X\in\ngo$, where $\{ A_i\}$, $\{ X_i\}$, are any
orthonormal basis of $\ag$ and $\ngo$, respectively.
If in addition $\ad{A}$ are normal operators for all $A\in\ag$, then we get that $\tr{(\ad{A}|_{\ngo})^t\ad{X}|_{\ngo}}=0$
(see \cite[Lemma 4.7]{solvsolitons}) and therefore
\begin{equation}\label{ricsolnor}
\begin{array}{l}
\la\Ricci A,A\ra =  -\tr{S(\ad{A}|_{\ngo})^2}, \qquad
\la\Ricci A,X\ra = 0,\\ \\
\la\Ricci X,X\ra = -\unm\sum\la [X,X_i],X_j\ra^2 +\unc\sum\la [X_i,X_j],X\ra^2
-\la[\Hm,X],X\ra.
\end{array}
\end{equation}

Another useful tool, when we are dealing with the Ricci operator, specially in the nilpotent case, is the concept of a nice basis.

\begin{definition}\label{nicecond}
  Let $\ggo$ be a Lie algebra. We will say that the basis $\{X_1, \dots, X_n \}$ is {\it nice} if the structural constants given by $[X_i,X_j]=\sum c_{ij}^k X_k$ satisfy
$$
\begin{array}{@{\bullet\;\;}l}
  \text{for all } i,j \text{ there exists at most one } k \text{ such that } c_{ij}^k \ne 0, \\
  \text{for all } i,k  \text{ there exists at most one } j \text{ such that } c_{ij}^k \ne 0.
\end{array}
$$
\end{definition}
It is easy to see that when the basis is nice the operator $M$ is diagonal not only in such a basis but also in any rescaling (see (\ref{R})). If $\ggo$ is nilpotent (thus $M = \Ricci$), a much stronger result holds (see \cite{nicebasis}).

\subsection{Root decomposition}\label{root}

Let $\ggo$ be a complex semisimple Lie algebra and $B$ its Killing form. Let $\hg$ be a Cartan subalgebra, $\Delta$ the corresponding system of roots, $\Delta^+$ the positive ones with respect to some fixed order and let $\Pi$ be the corresponding set of simple roots. For each $\alpha \in \Delta$ it is possible to choose $X_\alpha \in \ggo_\alpha$, the corresponding root space, such that for all $\alpha, \beta \in \Delta$
\begin{equation}\label{sperootvect}
\begin{array}{l}
[X_{\alpha},  X_{-\alpha}] = H_\alpha, \qquad [H, X_{\alpha}]= \alpha(H) X_{\alpha}, \text{ for all } H \in \hg,\\ \\

[X_{\alpha},  X_{\beta}]=0, \,\,\text{ if } \alpha+\beta \ne 0, \, \alpha+\beta \notin \Delta, \quad

[X_{\alpha},  X_{\beta}]= N_{\alpha,\beta} X_{\alpha+\beta},\,\, \text{ if } \alpha+\beta \in \Delta,
\end{array}
\end{equation}
\noindent  where $N_{\alpha,\beta} = - N_{-\alpha,-\beta} \in \RR$. Here $H_\alpha$ denotes the dual vector associated to $\alpha$, i.e. $B(H,H_\alpha)= \alpha(H)$ for all $H \in \hg$ (see \cite{Hel} Theorem 5.5 or \cite{Kn} Theorem 6.6).

From this,
\begin{equation}\label{genu}
\begin{array}{l}
\ug = \displaystyle{\sum_{\alpha \in \Delta}} \RR \i H_\alpha + \sum_{\alpha \in \Delta} \RR (X_\alpha - X_{-\alpha}) + \sum_{\alpha \in \Delta} \RR \i(X_\alpha + X_{-\alpha}) \end{array}
\end{equation}
\noindent is a compact real from of $\ggo$. Moreover, if we denote $H^\alpha=\i H_\alpha$, $X^\alpha = (X_\alpha - X_{-\alpha})$ and $Y^\alpha = \i (X_\alpha + X_{-\alpha})$
we get
\begin{equation}\label{lbroot}
\begin{array}{l}
[H^\alpha, X^\beta] = c_{\alpha,\beta}  Y^\beta, \,\, [H^\alpha, Y^\beta]= - c_{\alpha,\beta} X^\beta,\\ \\

[X^\alpha, X^\beta]= N_{\alpha,\beta} X^{\alpha+\beta}-  N_{-\alpha,\beta} X^{-\alpha+\beta},\; \beta \ne \pm \alpha,\\ \\

[Y^\alpha, Y^\beta]= - N_{\alpha,\beta} X^{\alpha+\beta} -  N_{-\alpha,\beta} X^{-\alpha+\beta},\; \beta \ne \pm \alpha,\\ \\

[X^\alpha,  Y^\beta]= N_{\alpha,\beta} Y^{\alpha+\beta}-  N_{-\alpha,\beta} Y^{-\alpha+\beta},\;\beta \ne \pm \alpha, \\ \\

[X^\alpha, Y^\alpha]= 2 H^\alpha,
 \end{array}
\end{equation}

\noindent where $c_{\alpha,\beta}$ is a real number and $N_{\alpha,\beta}=0$ if $\alpha+\beta \notin \Delta$ (see  \cite{Kn} Theorem 6.11).
If we denote by $\kg = \Span\{X^\alpha, \alpha \in \Delta\}$ and $\pg= \Span\{ H_\alpha, \i Y^\alpha,\, \alpha \in \Delta \}$ then
$\ug = \kg \oplus \i \pg$ is a compact real form of $\ggo$ and $\ggo_0=\kg \oplus \pg$ is a Cartan decomposition of a non-compact real form of $\ggo$ (see \cite{Kn} pag. 360).
We say that $\ug= \kg \oplus \i \pg$ is the compact dual of $\ggo_0= \kg \oplus \pg$.
It worth to point out that the compact real form of a semisimple complex Lie algebra is unique up to isomorphism (see \cite[Corollary 6.20]{Kn}).

\section{Ricci negative inner products}

We will show in what follows that for a compact (real) semisimple Lie algebra $\ug$ and a real representations of $\ug$, $(V,\pi)$, satisfying some conditions, the semidirect product  $\lgo= (\RR Z \oplus \ug)\ltimes V$, where $\RR Z \oplus \ug$ is a central extension of $\ug$, admits an inner product with negatively defined Ricci operator.  We will first show that $\lgo$ degenerates into a solvable Lie algebra and then we will prove that this limit admits an inner product with $\Ricci < 0$ and hence so does the starting Lie algebra.
Note that given a compact semisimple Lie algebra $\ug$, by the uniqueness up to isomorphism of a real compact form, we will often denote by $\Delta$, $\Delta^+$ and $\Pi$ the corresponding root data of its complexification and a basis of $\ug$ as in (\ref{genu}), with no further comments.

\begin{lemma}\label{lema0} Let $\ug$ be a compact semisimple Lie algebra and let $(V,\pi)$ be a finite dimensional, real representation of $\ug$ such that $V=V_1 \oplus V_2$ where $V_1$ an $V_2$ are $H^\alpha$-invariant subspaces  and
$\pi(X^\alpha)(V_1) \subset V_2$, and $\pi(Y^\alpha)(V_1) \subset V_2$, for any $\alpha \in \Delta^+$.
Then the Lie algebra $\lgo= (\RR Z \oplus \ug)\ltimes V$ where $\ad Z|_\ug = 0$ and $\ad Z|_V = Id$ degenerates in a solvable Lie algebra $\lgo_\infty =(\RR^l, \mu)$, where $l= \dim \lgo=\dim \ug + 1 + \dim V$.
\end{lemma}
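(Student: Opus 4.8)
The plan is to realise $\lgo_\infty$ explicitly as the limit of a one-parameter family $g_s\cdot\mu$ inside the orbit $\Gl_l(\RR)\cdot\mu$, where $\mu$ denotes the bracket of $\lgo$ and $g_s$ is a one-parameter subgroup that is diagonal with respect to a suitable decomposition of $\lgo$. By the discussion preceding Proposition \ref{limit}, any such limit lies in $\overline{\Gl_l(\RR)\cdot\mu}$ and is automatically a Lie bracket, so the whole task is to choose the weights of $g_s$ so that (i) every structure constant is multiplied by a \emph{non-positive} power of $s$ (forcing the limit to exist), and (ii) the surviving brackets define a solvable algebra. Concretely, I would decompose $\lgo=W_0\oplus W_1$ with $W_0=\RR Z\oplus\tg\oplus V_1$ and $W_1=\Span\{X^\alpha,Y^\alpha:\alpha\in\Delta^+\}\oplus V_2$, where $\tg=\Span\{H^\alpha\}$ is the (abelian) maximal torus, and let $g_s$ act as the identity on $W_0$ and as $s\,\Id$ on $W_1$. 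For weight vectors $u,w$ of $g_s$-weights $p,q\in\{0,1\}$ one has $(g_s\cdot\mu)(u,w)=\sum_r s^{\,r-p-q}\,\mu_r(u,w)$, where $\mu_r(u,w)$ is the component of $\mu(u,w)$ in the weight-$r$ subspace; thus a bracket survives as $s\to\infty$ exactly when $r=p+q$ and dies when $r<p+q$.

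Next I would run through the structure equations \eqref{lbroot} together with the representation $\pi$. The torus brackets $[H^\alpha,X^\beta]=c_{\alpha,\beta}Y^\beta$ and $[H^\alpha,Y^\beta]=-c_{\alpha,\beta}X^\beta$ have weights $p,q;r=0,1;1$, hence survive and make $\tg$ act on $\Span\{X^\beta,Y^\beta\}$ by the rotations $X^\beta\leftrightarrow Y^\beta$. Since $V_1,V_2$ are $H^\alpha$-invariant, $[H^\alpha,V_1]\subset V_1$ (weights $0,0;0$) and $[H^\alpha,V_2]\subset V_2$ (weights $0,1;1$) both survive, and $[Z,v]=v$ survives on all of $V$. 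By the hypotheses $\pi(X^\alpha)(V_1),\pi(Y^\alpha)(V_1)\subset V_2$, so $[X^\alpha,V_1]$ and $[Y^\alpha,V_1]$ have weights $1,0;1$ and survive with image in $V_2$. On the other hand, every remaining internal bracket of $\ug$, namely $[X^\alpha,Y^\alpha]=2H^\alpha$ (weights $1,1;0$) and $[X^\alpha,X^\beta]$, $[Y^\alpha,Y^\beta]$, $[X^\alpha,Y^\beta]$ for $\beta\ne\pm\alpha$ (weights $1,1;1$), acquires a strictly negative exponent and dies in the limit.

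The step I expect to need the most care is the action of $X^\alpha,Y^\alpha$ on $V_2$, which the hypotheses \emph{do not} constrain. Writing $\pi(X^\alpha)v=v'+v''$ with $v'\in V_1$ and $v''\in V_2$ for $v\in V_2$, the $V_1$-component carries weight datum $1,1;0$ and the $V_2$-component $1,1;1$, so both exponents are strictly negative; hence $[X^\alpha,V_2]$ and $[Y^\alpha,V_2]$ vanish in the limit regardless of their image. This is precisely why no assumption on $\pi(X^\alpha)|_{V_2}$ or $\pi(Y^\alpha)|_{V_2}$ is required, and it confirms that all exponents are $\le 0$, so that $\mu_\infty:=\lim_{s\to\infty}g_s\cdot\mu$ exists and is a degeneration of $\mu$.

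Finally I would read off the limit $\lgo_\infty=(\RR^l,\mu_\infty)$ with $l=\dim\ug+1+\dim V$ and verify solvability directly from the surviving brackets. They show that $\ngo:=\Span\{X^\alpha,Y^\alpha\}\oplus V$ is an ideal on which the only nonzero internal brackets are $[X^\alpha,V_1],[Y^\alpha,V_1]\subset V_2$ (recall $[V,V]=0$), so $[\ngo,\ngo]\subset V_2$ and $[\ngo,V_2]=0$; thus $\ngo$ is two-step nilpotent. Since $[Z,\ug]=0$ and $[H^\alpha,H^\beta]=0$, the complement $\RR Z\oplus\tg$ is abelian and acts on $\ngo$, so $\lgo_\infty=(\RR Z\oplus\tg)\ltimes\ngo$ is solvable, which is the desired degeneration.
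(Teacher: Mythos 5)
Your proof is correct and follows essentially the same route as the paper: a diagonal one-parameter degeneration adapted to the decomposition $\RR Z \oplus \tg \oplus \Span\{X^\alpha,Y^\alpha\} \oplus V_1 \oplus V_2$, with the same exponent bookkeeping, the same surviving brackets, and the same verification that the limit is (abelian) $\ltimes$ (two-step nilpotent). The only immaterial difference is your choice of weights $(0,0,1,0,1)$ in place of the paper's scaling $\phi_t$ with weights $(0,0,1,1,2)$ and auxiliary parameter $\rho$ (needed later in the proof of Theorem \ref{theTheo}, not for the lemma itself); since $V$ is an abelian ideal, shifting its weights uniformly changes no exponents, so both scalings yield the identical limit bracket (\ref{limitUg}).
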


\begin{proof}
Let $\ug$ be a compact semisimple Lie algebra and $(V,\pi)$ a finite dimensional (real) representation of $\ug$. Let  $\lgo= (\RR Z \oplus \ug)\ltimes V$  be the Lie algebra where $\ad Z|_\ug = 0$ and $\ad Z|_V = Id$.
 If $\ggo$ is the complexification of $\ug$ let us consider the basis of $\ug$ as in (\ref{genu}) and from there, let as fix a basis of $\lgo$, $\mathcal{B}$ given by
\begin{equation}\label{B}
  \mathcal{B}=\{Z, H^\alpha, X^\beta, Y^\beta, \, \alpha \in \Pi, \beta \in \Delta^+\} \cup \mathcal{B}_1,
\end{equation}
\noindent where $\mathcal{B}_1 = \{v_i:\, 1 \le i \le m \}$ is any basis of $V$ with $v_i \in V_1$ for $1 \le i \le r = \dim V_1$ and $v_i \in V_2$ for $r <  i \le m=\dim V$.

For each $t>0$ define $\phi_t \in \glg(\lgo)$ such that
\begin{equation}\label{degU}
\begin{array}{l}
{\phi_t}(Z) = Z, \qquad {\phi_t}(H^\alpha) = H^\alpha, \; \alpha \in \Pi, \\ \\
{\phi_t}(X^\alpha) = t X^\alpha, \; {\phi_t}( Y^\alpha) = t Y^\alpha, \quad \alpha \in \Delta^+,  \\ \\
{\phi_t}(v_i) = \left\{ \begin{array}{ll}
                                     \tfrac{t}{\rho} \, v_i & \hbox{if } i \le r, \\ \\
                                     t^2\,v_i, & \hbox{if } i > r,
                                   \end{array}
                                 \right.
\end{array}
\end{equation}
\noindent where $\rho \in \RR$, $\rho\ne 0$, is fixed.
It is not hard to check that $\lb_t = \phi_t.\lb$ is given by
\begin{equation}\label{degU}
  \begin{array}{l}
[H^\alpha,X^\beta]_t = [H^\alpha,X^\beta], \;\; [ H^\alpha, Y^\beta]_t=[H^\alpha, Y^\beta], \;\; \forall\,\alpha \in \Pi, \beta \in \Delta^+ \\ \\

[X^\alpha, Y^\beta]_t=\frac{1}{t^\epsilon}[X^\alpha, Y^\beta],\; \epsilon=1 \text{ if } \beta \ne \alpha, \epsilon=2 \text{ if } \beta=\alpha,  \\ \\

[Y^\alpha,Y^\beta]_t=\frac{1}{t}[Y^\alpha,Y^\beta],\,\, [X^\alpha,X^\beta]_t=\frac{1}{t}[X^\alpha,X^\beta], \;\beta \ne \alpha,  \\ \\

[Z, v_i]_t= v_i,\;  \;\; [H^\alpha,  v_i]_t= [H^\alpha, v_i],\; \quad \forall\,i,\, \alpha \in \Pi,  \\ \\

[X^\alpha, v_i]_t=\rho\, [X^\alpha, v_i],\; \;\; [Y^\alpha, v_i]_t= \rho\, [Y^\alpha, v_i],\;\;  \forall\, \alpha \in \Delta^+,\, i \le r, \\ \\

[X^\alpha, v_i]_t=\frac{1}{\rho\, t^2}\displaystyle{\sum_{j \le r}} c(X^\alpha, v_i)^j v_j + \frac{1}{t}\displaystyle{\sum_{j>r}} c(X^\alpha, v_i)^j v_j, \forall\, \alpha \in \Delta^+,\, i > r, \\

[Y^\alpha, v_i]_t=\frac{1}{\rho\, t^2}\displaystyle{\sum_{j \le r}} c(Y^\alpha, v_i)^j v_j + \frac{1}{t}\displaystyle{\sum_{j>r}} c(Y^\alpha, v_i)^j v_j,, \forall\, \alpha \in \Delta^+,\, i > r,
\end{array}
\end{equation}
\noindent where $c(Y^\alpha, v_i)^j$ are the structure coefficients, i.e. $[Y^\alpha, v_i] = \displaystyle{\sum_{j = 1}^m} c(Y^\alpha, v_i)^j v_j$.

Therefore, $\mu=\displaystyle{\lim_{t \to \infty}} \lb_t= \displaystyle{\lim_{t \to \infty}} \phi_t.\lb$ is well defined for any $\rho$ and the non-trivial brackets are given by
\begin{equation}\label{limitUg}
   \begin{array}{l}
\mu(H^\alpha,X^\beta)=[H^\alpha,X^\beta], \;\; \mu(H^\alpha,Y^\beta)=[H^\alpha,Y^\beta],   \\ \\
 \mu(Z, v_i)= v_i,\;  \;\; \mu(H^\alpha, v_i)= [H^\alpha, v_i ]= \pi(H^\alpha) v_i,\; \quad  \\ \\
\mu(X^\alpha, v_i)= \rho [X^\alpha, v_i]=\rho \pi(X^\alpha)v_i,\; \mu( Y^\alpha, v_i)= \rho [Y^\alpha, v_i]=\rho \pi(Y^\alpha)v_i ,\;\;  i \le r.
\end{array}
\end{equation}

It is easy to see that $\lgo_\infty = (\RR^{l}, \mu),$  is a solvable Lie algebra with nilradical
\begin{equation}\label{ngogen}
  \ngo = \Span \{X^\alpha,Y^\alpha, v_i, \;\, \alpha \in \Delta^+, 1\le i \le m\},
\end{equation}
\noindent as claimed.
\end{proof}

It is proved in \cite[Theorem 2]{NN} that if $\sg$ is a solvable Lie algebra, $\ngo$ its nilradical and $\zg$ is
the center of $\ngo$, then a necessary condition for $\sg$ to admit an inner product of negative Ricci curvature, is the existence of $Y \in \sg$
such that $\tr \ad Y > 0$ and all the eigenvalues of the restriction of the operator $\ad Y$ to
$\zg$ have a positive real part.  On the other hand, if there exists $Y \in \sg$ such that all the eigenvalues of the restriction of $\ad Y$ to $\ngo$ have
positive real part, then $\sg$ admits an inner product with $\Ricci < 0$.

If we denote by $\ngo_\ug =\Span \{X^\alpha,Y^\alpha,\;\, \alpha \in \Delta^+\}$  then the nilradical of $l_\infty$ given in (\ref{ngogen}), decomposes as $\ngo=\ngo_\ug \oplus V$.
It is clear from the definition of the bracket $\mu$ (see \ref{limitUg}) that $\ngo$ is a two step nilpotent Lie algebra. It is abelian if and only if $\pi(X)|_{V_1}=0$ for every $X \in \ngo_\ug$. In the case when for at least one $X \in \ngo_\ug$, $\pi(X)$ is not trivial on $V_1$ then  $\lgo_\infty$ satisfies the first condition of \cite[Theorem 2]{NN} but not the second one.

\begin{remark}
We note that in fact $\phi_t$ and $\lgo_\infty$ depend on $\rho$. Nevertheless, all the limits we get using different values of $\rho$ are isomorphic as Lie algebras.
\end{remark}

\begin{theorem}\label{theTheo}
Let $\ug$ be a compact semisimple Lie algebra, $(V,\pi)$ a representation of $\ug$ with $V=V_1\oplus V_2$ and $\lgo$ as in Lemma \ref{lema0}. If in addition, there exist an inner product on $V$ such that $V_1$ is orthogonal to $V_2$ and
 \begin{itemize}
   \item[i)]  $\pi(H^\alpha)$ is a skew-symmetric operator of $V$ for any $\alpha \in \Pi$,
   \item[ii)] $\pi(X)_{|_{V_1}}$ is not trivial for every $X=X^\alpha, Y^\alpha$, $\alpha \in \Delta^+$ and
   \item[iii)] $\tr \pi(Y)_{|_{V_1}}^t \pi(X)_{|_{V_1}} = 0$ whenever $X \ne Y$ are elements of $\{X^\alpha, Y^\alpha,\, \alpha \in \Delta^+\}$,
 \end{itemize}
  then $\lgo$ admits an inner product with negative Ricci curvature.
\end{theorem}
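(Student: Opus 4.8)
The plan is to apply Proposition \ref{limit}. By Lemma \ref{lema0} the solvable algebra $\lgo_\infty=(\RR^l,\mu)$ is a degeneration of $\lgo$, so it suffices to produce an inner product on $\lgo_\infty$ with $\Ricci<0$. Writing $\lgo_\infty=\ag\oplus\ngo$ with $\ag=\RR Z\oplus\Span\{H^\alpha:\alpha\in\Pi\}$ abelian and nilradical $\ngo=\ngo_\ug\oplus V$, one reads off from (\ref{limitUg}) that $\ngo_\ug$ is abelian, that the only nonzero brackets inside $\ngo$ are $\mu(X^\gamma,v),\mu(Y^\gamma,v)\in V_2$ for $v\in V_1$, and in particular $[\ngo,\ngo]\subseteq V_2$. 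Thus $\ngo$ is two-step nilpotent and I can use the abelian-solvable Ricci formula (\ref{ricsol}).

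I would fix the inner product making $\RR Z$, $\Span\{H^\alpha\}$, $\ngo_\ug$, $V_1$ and $V_2$ mutually orthogonal, and on $V$ I take the given product, so that $V_1\perp V_2$ and, by (i), every $\pi(H^\alpha)$ is skew-symmetric on $V$. Three scaling parameters are then left free: the length $z=|Z|$; an overall factor $\rho$ governing the size of the $[\ngo_\ug,V_1]$ brackets (the free parameter already present in $\mu$); and, crucially, a ratio $\lambda\ne1$ between the lengths of $X^\gamma$ and $Y^\gamma$ inside each plane $\Span\{X^\gamma,Y^\gamma\}$. The asymmetry $\lambda\ne1$ is the heart of the construction: with $\lambda=1$ each $\ad H^\alpha|_\ngo$ is skew-symmetric, hence $\la\Ricci H^\alpha,H^\alpha\ra=-\tr S(\ad H^\alpha|_\ngo)^2=0$ by (\ref{ricsol}), so $\Ricci$ could never be negative definite; breaking normality in the $\ngo_\ug$ planes is exactly what makes the Cartan directions strictly negative.

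Next I would show that $\Ricci$ is block-diagonal for this decomposition. Because $\ad Z|_\ngo$ vanishes on $\ngo_\ug$ and is the identity on $V$, each $\ad H^\alpha$ preserves $\ngo_\ug$, $V_1$ and $V_2$, and every nilpotent bracket lands in $V_2$, the trace expressions in (\ref{ricsol}) give $\la\Ricci A,X\ra=0$ for $A\in\ag$, $X\in\ngo$, and $\la\Ricci Z,H^\alpha\ra=0$; within $\ngo$ the three pieces $\ngo_\ug$, $V_1$, $V_2$ are mutually $\Ricci$-orthogonal, and hypothesis (iii), the vanishing of $\tr(\pi(Y)|_{V_1}^t\pi(X)|_{V_1})$ for distinct $X,Y$, diagonalizes $\Ricci|_{\ngo_\ug}$. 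On the abelian block one finds $\la\Ricci Z,Z\ra<0$, while on $\Span\{H^\alpha\}$ the restriction $\la\Ricci\,\cdot\,,\cdot\ra$ is $-\tfrac12(\lambda-\lambda^{-1})^2$ times a positive multiple of the Gram form $(H,H')\mapsto\sum_{\gamma\in\Delta^+}\gamma(H)\gamma(H')$, where I use $c_{\alpha,\gamma}=\gamma(H_\alpha)$ from (\ref{lbroot}); since the positive roots span $\hg^*$ this Gram form is positive definite, so the block is negative definite.

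It remains to treat the nilpotent block, and this is where the balancing occurs. From (\ref{ricsol}) each $X^\gamma$ and $Y^\gamma$ receives a strictly negative bracket term proportional to $-\tfrac12\rho^2\tr(\pi(X^\gamma)|_{V_1}^t\pi(X^\gamma)|_{V_1})$, nonzero by hypothesis (ii), plus a bounded non-normality term $\tfrac12\sum_\alpha\la[\ad H^\alpha|_\ngo,(\ad H^\alpha|_\ngo)^t]X^\gamma,X^\gamma\ra$ and no mean-curvature contribution, since $[Z,\ngo_\ug]=0$; each vector of $V_1$ gets a negative bracket term together with the negative mean-curvature term $-\dim V/z^2$; and each vector of $V_2$ gets a positive bracket term $+\tfrac14\rho^2(\cdots)$ offset only by $-\dim V/z^2$. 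I would therefore fix the parameters in order: first $\lambda\ne1$ and the lengths of $H^\alpha$ so the Cartan block is negative definite; then $\rho$ large enough that on $\ngo_\ug$ the bracket terms dominate the now-fixed, bounded non-normality terms; and finally $z$ small enough that on $V_2$ the term $-\dim V/z^2$ dominates the fixed positive bracket contribution. Since shrinking $z$ only improves the $Z$ and $V_1$ directions and leaves $\ngo_\ug$ and the Cartan block untouched, all blocks become negative simultaneously, giving $\Ricci<0$ on $\lgo_\infty$ and hence, by Proposition \ref{limit}, on $\lgo$. The main obstacle is precisely this tension: the device that rescues the $H^\alpha$ directions forces a positive non-normality term into the $\ngo_\ug$ computation, and hypothesis (ii) is exactly what supplies a competing negative term there.
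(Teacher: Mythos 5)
Your proposal is correct, and it follows the paper's skeleton (Lemma \ref{lema0}, Proposition \ref{limit}, the solvable Ricci formulas, the same block-orthogonality analysis, and the same use of hypotheses (ii) and (iii)), but it resolves the crucial difficulty --- the vanishing of $\la \Ricci H^\alpha, H^\alpha\ra$ when the $\ad H^\alpha|_\ngo$ are skew-symmetric --- by a genuinely different mechanism. The paper keeps the basis $\mathcal{B}$ orthonormal, so every $\ad A|_\ngo$, $A\in\ag$, is normal and formula (\ref{ricsolnor}) applies with no non-normality terms at all; it then chooses $\rho$ \emph{small}, so that $-m\,\Id$ dominates both $V_1$ and $V_2$ blocks, obtains $\Ricci_\mu\le 0$ with kernel exactly in the Cartan directions, and finally perturbs the inner product slightly \emph{on $V_1$} so that no $\ad H^\alpha|_{V_1}$ is skew-symmetric, citing \cite[Lemma 2]{NN}; the block structure (\ref{adnu}) guarantees the cross terms $\la\Ricci\ag,\ngo\ra$ stay zero and continuity preserves negativity of the other blocks. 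You instead break normality \emph{on $\ngo_\ug$} from the outset via the ratio $\lambda\neq 1$, which forces you to work with the full formula (\ref{ricsol}) including the non-normality terms, and you run the parameters in the opposite regime: $\rho$ \emph{large} so that the strictly negative bracket terms supplied by hypothesis (ii) beat the (fixed, bounded) positive non-normality contributions on $\ngo_\ug$, and then $|Z|$ small so that the mean-curvature term beats the $\rho^2$-growing positive term on $V_2$; your identification $c_{\alpha,\gamma}=\gamma(H_\alpha)$ and the positive-definiteness of the root Gram form on $\Span_\RR\{H_\alpha:\alpha\in\Pi\}$ (it is half the Killing form there) indeed make the Cartan block negative definite, and your checks that all cross terms vanish (including the diagonality of $[\ad H^\alpha|_\ngo,(\ad H^\alpha|_\ngo)^t]$ in the adapted basis) go through. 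What each approach buys: the paper's computation is shorter and cleaner precisely because normality kills every non-normality term, at the cost of an abstract perturbation step resting on an external lemma; yours is fully explicit and self-contained, avoiding both the citation and the continuity argument, at the cost of the extra bookkeeping and a careful ordering of the three parameters $(\lambda, \rho, |Z|)$, whose interdependence you correctly identify and resolve.
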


\begin{remark}
Since $\ug$ is a compact Lie algebra, there exists an inner product on $V$ such that $\ug$ acts by skew-symmetric operators. Also note that by ii) the center of the nilradical of $\lgo_\infty$ is not abelian and therefore $\lgo_\infty$ is a solvable Lie algebra that satisfies the first condition of \cite[Theorem 2]{NN} but not the second one.
\end{remark}

\begin{proof}

Since $\ug$ and $(V,\pi)$ satisfy the hypotheses of Lemma \ref{lema0}, the Lie algebra $\lgo = (\RR Z \oplus \ug)\ltimes V = (\RR^l,\lb)$ degenerates in the solvable Lie algebra $\lgo_\infty=(\RR^l,\mu)$, where $\mu$ is given by (\ref{limitUg}). We will show that $\lgo_\infty$ admits an inner product such that the corresponding Ricci operator is negative definite and therefore so does $\lgo$ by Proposition \ref{limit}.

 Let $\mathcal{B}$ be the basis of $\lgo_\infty$ given in (\ref{B})  and let $\ip$ be the inner product on $\lgo_\infty$ such that $\ip_{|_V}$ is as in the statement and $\mathcal{B}$ is an orthonormal basis of $\lgo_\infty$.

Note that $\lgo_\infty = \ag \oplus \ngo$ as in (\ref{solvdecomp}) where $\ag= \Span \{Z, H^\alpha,\, \alpha \in \Pi \}$ is abelian and for each $\alpha \in \Pi$,  $\ad_{\mu}(H^\alpha)_{|\ngo}$ is a skew-symmetric operator (see (\ref{lbroot})).

It is easy to check that the mean curvature vector is  $\Hm = (\dim V) Z = m \, Z$  and since $\ag$ is acting by normal operators on $\ngo$, $\la \Ricci_{\mu} \ag, \ngo \ra = 0$ (see (\ref{ricsolnor})).

Recall from (\ref{ricsolnor}) that for $X,Y \in \ngo$,  $\la \Ricci_{\mu} X , Y  \ra $ is given by
{\small \begin{equation}\label{ricxy}  -\unm\sum\la [X,X_i],X_j\ra \la [Y,X_i],X_j\ra  + \unc\sum\la [X_i,X_j],X\ra \la [X_i,X_j],Y\ra
-\la[S(\Hm),X],Y\ra,\end{equation}}

\noindent where in this case $\{X_i\}$ is the orthonormal basis of $\ngo$
$$
\{ X^\alpha, Y^\alpha,v_i: \, \, \alpha \in \Delta^+, 1 \le i \le m\}.
$$
It is clear that the second term of (\ref{ricxy}) can only be nontrivial for $X,Y \in V_2$ and the third one is zero unless $X = Y \in V$. We thus get for
 $X, Y \in \{X^\alpha, Y^\alpha,\, \alpha \in \Delta^+\}$,
 $$ \begin{array}{l}
 \la \Ricci_{\mu} X , Y  \ra  =  -\tfrac{\rho^2}{2}\displaystyle{\sum_{i \le r < j }} \la \pi(X) v_i,v_j\ra \la \pi(Y) v_i,v_j\ra, \\
 \qquad = -\tfrac{\rho^2}{2}\displaystyle{\sum_{ i \le r}} \la \pi(X) v_i, \pi(Y) v_i\ra
 = -\tfrac{\rho^2}{2} \tr \pi(Y)_{|_{V_1}}^t \pi(X)_{|_{V_1}}.
  \end{array}$$

By iii) this is zero for $X \ne Y$ and it is negative for $X = Y$ (see ii)).

It also follows from (\ref{ricxy}) that $\la \Ricci_{\mu} V_1 , V_2 \ra =0$ and
for $X=v_k,\,Y=v_j$, $k,j \le r$,

$$ \begin{array}{l}
\la \Ricci_{\mu} v_k , v_j  \ra  =   -\tfrac{\rho^2}{2} \displaystyle{ \sum_{\substack{ X=X^\alpha, Y^\alpha \\ l > r }}} \la \pi(X) v_k,v_l\ra \la \pi(X) v_j,v_l\ra - m \la [Z, v_k], v_j \ra, \\
\quad  =  -\tfrac{\rho^2}{2} \displaystyle{ \sum_{ X=X^\alpha, Y^\alpha }} \la \pi(X) v_k, \pi(X) v_j \ra - m \, \delta_{k,j}.
 \end{array}$$

In the same way, for $X=v_k,\,Y=v_j$, $k,j > r$,
$$ \begin{array}{l}
\la \Ricci_{\mu} v_k , v_j  \ra  =  \tfrac{\rho^2}{4} \displaystyle{ \sum_{\substack{ X=X^\alpha, Y^\alpha \\ l \le r }}} \la \pi(X) v_l,v_k\ra \la \pi(X) v_l,v_j\ra - m\; \delta_{k,j}  \\
\qquad =  \tfrac{\rho^2}{4} \displaystyle{ \sum_{ X=X^\alpha, Y^\alpha }} \la \pi(X)_{|_{V_1}}^t v_k, \pi(X)_{|_{V_1}}^t v_j\ra - m \, \delta_{k,j} .
 \end{array}$$

We therefore get that $\Ricci_\mu$ restricted to $V=V_1 \oplus V_2$ is given by

{\small $$ \left[\begin{array}{c:c} -\tfrac{\rho^2}{2} \displaystyle{\sum_{X=X^\alpha, Y^\alpha}} \pi(X)_{|_{V_1}}^t \pi(X)_{|_{V_1}} - m\,  \Id_{V_1} & 0 \\ \hdashline 0 & \tfrac{\rho^2}{4} \displaystyle{\sum_{X=X^\alpha, Y^\alpha}} \pi(X)_{|_{V_1}} \pi(X)_{|_{V_1}}^t - m \, \Id_{V_2}   \end{array} \right].
$$}

Since $\pi(X)_{|_{V_1}}^t \pi(X)_{|_{V_1}}$ and $\pi(X)_{|_{V_1}} \pi(X)_{|_{V_1}}^t$ are symmetric operators of $V_1$ and $V_2$ respectively for any $X \in \{X^\alpha, Y^\alpha,\, \alpha \in \Delta^+\}$ and $-m \Id$ is negative definite, there exists a small $\rho$ so that the finite sums
$$-\tfrac{\rho^2}{2} \displaystyle{\sum_{X=X^\alpha, Y^\alpha}} \pi(X)_{|_{V_1}}^t \pi(X)_{|_{V_1}} - m \, \Id_{V_1}, \quad
\tfrac{\rho^2}{4} \displaystyle{\sum_{X=X^\alpha, Y^\alpha}} \pi(X)_{|_{V_1}} \pi(X)_{|_{V_1}}^t - m \, \Id_{V_2}$$

\noindent are both negative definite.

Concerning $\ag$ we have that
$$ \la \Ricci_{\mu} Z,Z \ra = - m, \qquad \la \Ricci_{\mu} H^\alpha,H^\alpha \ra = 0, \, \forall \alpha \in \Pi.$$

Therefore, $\Ricci_\mu$ is a non-positive operator. To get negative, we only need that no ${\ad H^\alpha}_{|_\ngo}$ acts as a skew-symmetric operators. In order to get that, note that one can slightly perturb the inner product on $V_1$ so that no $H^\alpha$ acts as a skew-symmetric operator on $V_1$ for any $\alpha \in \Pi$ and still have $\la \Ricci(\ag),\ngo \ra=0$. Indeed, we only have to check that
$$\la\Ricci A,X\ra = -\unm\tr{(\ad{A}_{|_{\ngo}})^t\ad{X}_{|_{\ngo}}}=0, $$
\noindent for $A=Z, H^\alpha$ and $X= X^\alpha, Y^\alpha$ or $v_i$. It is clear that nothing changes for $Z$ since it $\ad Z_{|_V} = \Id$. For $H^\alpha$ note that on the one hand,  $\ad H^\alpha$ leaves the orthogonal subspaces $\ngo_\ug$, $V_1$ and $V_2$ invariant and on the other hand,
{\small \begin{equation}\label{adnu}
  \begin{array}{l}
  {\ad X^\alpha}_{|_\ngo}=\left[\begin{array}{cc:cc} 0 &  &  & \\ &0&& \\ \hdashline  &&0&0  \\ &&C(X^\alpha)&0   \end{array} \right], \qquad
{\ad Y^\alpha}_{|_\ngo} = \left[\begin{array}{cc:cc} 0 &  &  & \\ &0&& \\ \hdashline  &&0&0  \\ &&C(Y^\alpha)&0   \end{array} \right], \\ \\
 {\ad v_i}_{|_\ngo}= \left[\begin{array}{cc:cc} 0 &  &  & \\ &0&& \\ \hdashline  &&0&  \\ A& B&&0   \end{array} \right],\,\,  i \le r, \quad
{\ad v_i}_{|_\ngo} = 0,\,\,   i > r,
\end{array}
\end{equation}}
\noindent where the blocks correspond to the decomposition $\ngo = \ngo_\ug \oplus V$. Therefore, it is easy to see that for any small
perturbation of the inner product on $V_1$, $\tr{(\ad{H^\alpha}|_{\ngo})^t\ad{X}|_{\ngo}}=0$ for any $X$ in (\ref{adnu}).
Hence we can take a small perturbation of the inner product on $V_1$ such that no ${\ad H^\alpha}_{|_{V_1}}$ acts skew-symmetrically
(see \cite[Lemma 2]{NN})
and therefore the corresponding Ricci operator is negative definite as we wanted to show.
\end{proof}

\section{Explicit Examples}

In this section, we give explicit examples of representations for which the conditions in Theorem \ref{theTheo} hold, so we get explicit examples of negative Ricci curved Lie groups. Indeed we will show that for a representation of $\sug(m)$, $\sog(m)$ and $\spg(m)$ in some space of homogeneous polynomials, Theorem \ref{theTheo} can be applied.
In all the cases we will exhibit, as matrices, the realization of $\ug$ as in (\ref{genu}) to be able to get explicit formulas for the action to finally
show that in fact all the hypotheses of Theorem \ref{theTheo} are satisfied. All the cases behave in a very similar way, so we will give more details in the first one.

The study of whether the conditions of Theorem \ref{theTheo} hold for a more general representation requires a deep study of the systems of roots and weights which will be worked out in a forthcoming paper.

\subsection{Polynomial representations of $\sug(m)$}\label{repre}
For each $n \ge 2$ let $(\pi_n, W_n)$ be the representation of $\SU(m)$  where
$W_n =\mathcal{P}_{n}(\mathbb{C}^m)$ is the space of homogeneous polynomials in $m$
variables of degree $n$ seen as a real vector space with action given by
$$ (\pi_n(g) P)(z_1,\dots, z_m )=P(g^{-1}\left[ \begin{smallmatrix} z_1 \\ \vdots \\ z_m \end{smallmatrix}\right]).$$
This gives us, by differentiation, a representation of the Lie algebra $\sug(m)$ that will also be denoted by $(\pi_n, W_n)$.
Note that these representations are not irreducible in general.

Let us start from $\ggo^\CC = \slg(m,\CC)$, a complex simple Lie algebra of type $A_{m-1}$ and choose the Cartan subalgebra as the diagonal matrices in $\ggo^\CC$. To set the corresponding root system, let
$E_{i,j}$ denote the $m \times m$ matrix with zero entries except for the $(i,j)$ which is $1.$
Hence the roots of $\ggo^\CC$ are
\begin{equation}\label{roots}
  \alpha_{i,j}(H) = e_i(H)-e_j(H), \qquad 1\le i\ne j \le m,
\end{equation}
\noindent  where if $H = \displaystyle{\sum_{l=1}^{m}} h_l E_{l,l}$,  $e_k(H)=h_k$ and the corresponding root vectors are $X_{\alpha_{ij}}=E_{i,j}$ (see \cite{Hel} pp. 187).
We choose an order so that the positive roots are $\{\alpha_{i,j}, \, i \le j\}$ and the simple ones are  $\{\alpha_{l,l+1}, \, 1 \le l \le m-1\}.$
Hence, in the notation of Section \ref{root},
\begin{equation}\label{formCom}
   \begin{array}{l}
   H^{\alpha_{l,l+1}} = \i H_{\alpha_{l,l+1}} = \i (E_{l,l}- E_{l+1,l+1}), \quad l=1,\dots m-1, \\ \\
    X^{\alpha_{i,j}}= X_{\alpha_{ij}} - X_{-\alpha_{ij}} = E_{i,j} - E_{j,i}, \quad  1\le i < j \le m,\\ \\
 Y^{\alpha_{i,j}}= \i(X_{\alpha_{ij}} - X_{-\alpha_{ij}}) = \i(E_{i,j} + E_{j,i}), \quad  1\le i < j \le m.
\end{array}
\end{equation}

In this case (\ref{genu}) gives us $\ug = \sug(m)$.
Let us fix a basis of $W_n$,
\begin{equation}\label{basewm}
\mathcal{B}_1= \{p_{j_1,\dots,j_m},\; \i p_{j_1,\dots,j_m}, \quad j_i \in \NN_0,\;\; j_1+\dots+j_m=n \},
\end{equation}
where $p_{j_1,\dots,j_m} = z_1^{j_1}\dots z_m^{j_m} \in \mathcal{P}_{n}(\CC^m)$. Note that dimension of $W_n$ is $d=2\binom{n+m-1}{m-1}$.
Concerning the action, to get explicit formulas we use the fact that the algebra is acting by derivations and
{\small $$ \begin{array}{l}
 H^{\alpha_{l,l+1}}\cdot z_k= \left\{ \begin{array}{ll}
             -\i z_k, &  k=l,  \\
             \i z_k & k=l+1, \\
             0, & k \ne l,l+1,
               \end{array}  \right.
 X^{\alpha_{i,j}}\cdot z_k= \left\{ \begin{array}{ll}
             - z_j, & k=i, \\
              z_i, & k=j, \\
             0, & k \ne i,j,
           \end{array} \right.
Y^{\alpha_{i,j}}\cdot z_k= \left\{ \begin{array}{ll}
             -\i z_j, & k=i, \\
              -\i z_i, & k=j, \\
             0, & k \ne i,j.
           \end{array} \right.
\end{array}
$$}
In this way, we get for example that the monomials in $\mathcal{B}_1$ are weight vectors for the action of $\ggo^{\CC}$ and for $s=1,\i$
$$ X^{\alpha_{i,j}}\cdot s z_k^n = \left\{ \begin{array}{ll} - n\, s z_i^{n-1}z_j,& k=i,\\ n\, s z_j^{n-1}z_i,& k=j, \\ 0, & k\ne i,j. \end{array} \right.
$$
Let us take $V_1$ the subspace of $W_n$ generated by
\begin{equation}\label{DefS}
  \mathcal{S}=\{ z_k^n, \i z_k^n,\;\; k=1,\dots m\} \subset \mathcal{B}_1,
\end{equation}
and $V_2$ the subspace generated by $\mathcal{B}_1 \smallsetminus \mathcal{S}$.
It is easy to see that the inner product that makes $\mathcal{B}_1$ an orthonormal basis, satisfies the hypotheses of Theorem \ref{theTheo}. In fact, for every $l$, $H^{\alpha_{l,l+1}}$ leaves $V_1$ and $V_2$ invariant and acts as a skew-symmetric operator. It is also clear from the formulas of the action that for every $p \in \mathcal{S}$,  $X^{\alpha_{i,j}}\cdot p$ and $Y_{i,j}\cdot p$ are elements of
 $\Span (\mathcal{B}_1 \smallsetminus \mathcal{S})$. We note that here we are using that $n \ge 2$.

Hence, to apply Theorem \ref{theTheo} it only remains to show that
$\tr \pi_n(Y)_{|_{V_1}}^t \pi_n(X)_{|_{V_1}} = 0$ whenever $X \ne Y$ are elements in $\{X^{\alpha_{i,j}}, Y^{\alpha_{i,j}},\, 1 \le i < j \le m \}$. To see this, first note that
{\small  \begin{equation}\label{ricN}
  \tr \pi_n(Y)_{|_{V_1}}^t \pi_n(X)_{|_{V_1}} = \displaystyle{\sum_{\substack {1\le k \le m \\ p \in \mathcal{B}_1 \smallsetminus \mathcal{S}}}} \la \pi_n(X)z_k^n , p \ra \la \pi_n(Y) z_k^n, p\ra +  \la \pi_n(X) \i z_k^n , p \ra \la \pi_n(Y) \i z_k^n, p\ra,
  \end{equation}}
\noindent and using explicitly the action, it is easy to show that every term is zero.  
Indeed, if for example $X=X^\alpha$, $Y=Y^\beta$, $\alpha, \beta \in \Delta^+$, then  $\pi_n(X)$ leaves invariant the subspaces
$$\Span \{ p_{j_1,\dots,j_m},  j_1+\dots+j_m=n \},\, \text{ and }\, \Span \{\i p_{j_1,\dots,j_m}, \;\; j_1+\dots+j_m=n \}$$
\noindent  and $\pi_n(Y)$ interchanges them. For $X=X^\alpha$, $Y=X^\beta$, it is enough to note that
$$\la \pi_n(X^{\alpha_{i,j}}) s z_r^n, \pi_n(X^{\alpha_{k,l}} ) s z_r^n \ra =0, \text{ when } (i,j) \ne (k,l), s=1,\i.$$
We can therefore apply Theorem \ref{theTheo} to get:

\begin{theorem}
  Let $(W_n,\pi_n)$ be the standard real representation of $\sug(m)$ on the space of complex homogeneous polynomials of degree $n$ in $m$ variables $\mathcal{P}_{n}(\CC^m)$ extended
  to $\ug(m)$ by letting the center act as multiples of the identity. Hence the Lie algebra $\ug(m)\ltimes W_n$ admits an inner product with negative Ricci curvature for all $n,m \ge 2$.
\end{theorem}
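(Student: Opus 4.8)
The plan is to exhibit $\ug(m)\ltimes W_n$ as an instance of the construction in Lemma \ref{lema0} and then to verify the three hypotheses of Theorem \ref{theTheo}. First I would identify $\ug(m)=\RR Z\oplus\sug(m)$ as the central extension of $\sug(m)$, where $Z$ spans the one-dimensional center of $\ug(m)$ and, after rescaling, acts as the identity on $W_n$; this is exactly the Lie algebra $\lgo$ of Lemma \ref{lema0} with $\ug=\sug(m)$ and $V=W_n$. I would then take the decomposition $V_1=\Span\mathcal{S}$ and $V_2=\Span(\mathcal{B}_1\smallsetminus\mathcal{S})$ from (\ref{DefS}), equipped with the inner product making $\mathcal{B}_1$ orthonormal, so that $V_1\perp V_2$.

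Next I would check the remaining conditions in order. The Lemma \ref{lema0} requirement ($H^\alpha$-invariance of $V_1,V_2$ together with $\pi_n(X^\alpha)(V_1),\pi_n(Y^\alpha)(V_1)\subset V_2$) follows directly from the explicit action on the generators $z_k$: each $H^{\alpha_{l,l+1}}$ acts diagonally on the monomial basis, so it preserves both $V_1$ and $V_2$, and since its eigenvalues are purely imaginary it is skew-symmetric, which is hypothesis (i). For a pure power $z_k^n\in\mathcal{S}$ the image $\pi_n(X^{\alpha_{i,j}})z_k^n$ is a nonzero multiple of $z_i^{n-1}z_j$ or $z_j^{n-1}z_i$, which lies in $\Span(\mathcal{B}_1\smallsetminus\mathcal{S})=V_2$ precisely because $n\ge 2$; this simultaneously gives $\pi_n(X^\alpha)(V_1)\subset V_2$ and the nontriviality required in (ii), and the same computation applies to $Y^{\alpha_{i,j}}$.

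The heart of the argument is hypothesis (iii), namely the vanishing of $\tr\pi_n(Y)_{|_{V_1}}^t\pi_n(X)_{|_{V_1}}$ for distinct $X,Y\in\{X^{\alpha_{i,j}},Y^{\alpha_{i,j}}\}$. I would expand this trace over the orthonormal basis $\mathcal{S}$ of $V_1$ and $\mathcal{B}_1\smallsetminus\mathcal{S}$ of $V_2$ as in (\ref{ricN}) and show that each summand vanishes by a support argument. When $X=X^\alpha$ and $Y=Y^\beta$, the operator $\pi_n(X^\alpha)$ preserves the splitting $\Span\{p_{j_1,\dots,j_m}\}\oplus\Span\{\i p_{j_1,\dots,j_m}\}$ while $\pi_n(Y^\beta)$ interchanges the two factors, so the images are orthogonal term by term. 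When $X=X^\alpha$ and $Y=X^\beta$ with $\alpha\ne\beta$, the images $\pi_n(X^{\alpha_{i,j}})z_k^n$ and $\pi_n(X^{\alpha_{p,q}})z_k^n$ are supported on distinct monomials whenever $(i,j)\ne(p,q)$, so their inner product vanishes; the case $Y=Y^\beta$ is identical. I expect the main obstacle to be organizing this case analysis cleanly, that is, checking that the actions of the different root vectors on the powers $z_k^n$ have pairwise orthogonal images — this is where the specific structure of the polynomial representation and the choice of $V_1$ as the span of the pure powers are essential.

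With (i), (ii) and (iii) established, Theorem \ref{theTheo} applies and produces an inner product on $\ug(m)\ltimes W_n$ with negative Ricci curvature for all $n,m\ge 2$.
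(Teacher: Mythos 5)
Your proposal is correct and follows essentially the same route as the paper's own proof: the same decomposition $V_1=\Span\,\mathcal{S}$, $V_2=\Span(\mathcal{B}_1\smallsetminus\mathcal{S})$ with the inner product making $\mathcal{B}_1$ orthonormal, the same explicit computation of the root-vector action on the pure powers $z_k^n$ (using $n\ge 2$) to verify invariance, skew-symmetry and nontriviality, and the same two support arguments for hypothesis (iii) (the real/imaginary span parity for $X^\alpha$ versus $Y^\beta$, and disjoint monomial supports for distinct roots), followed by an application of Theorem \ref{theTheo}.
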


\begin{remark}
As was noticed in \cite{u2} for $\sug(2)$, the case when the representation is $\CC^n$, i.e. the case when $n=1$, must be studied separately since the action is different and the degeneration given in (\ref{degU}) leads to a solvable Lie algebra with an abelian nilradical. It is shown in \cite{u2}, Lemma 3.4, that this problem can be solved for  $\sug(2)$.
\end{remark}

\begin{remark}
  In this case, a straightforward calculation shows that the Ricci operator restricted to $V=W_n$ is diagonal. Also, the inner product perturbation can be found explicitly by considering a rescaling of the vectors in  $\mathcal{S}$.
\end{remark}

\subsection{Polynomial representations of $\sog(m)$}

Let $(W_n, \pi_n)$ be the standard representation of $\sog(m)$ on the space of complex-valued homogeneous polynomials of degree $n$ on $\RR^m$ derived from the standard action of the group $\SO(m)$.

In \cite[Chap.\ IV,\S 5, Examples 1,2]{Kn2} it is shown that if $(x_1, \dots, x_m) \in \RR^m$, it is convenient to see these polynomials as powers of
{\small \begin{equation}\label{zetas}
\begin{array}{l}
  z_1=x_1+\i x_2, \;z_2=x_1 - \i x_2, \dots, z_{m-1}= x_{m-1}+\i x_{m}, \,z_{m}= x_{m-1} -\i x_{m},  \quad m \, \text{is even}\\ \\
  z_1=x_1+ \i x_2, \;z_2=x_1- \i x_2, \dots, z_{m-1}= x_{m-2}-\i x_{m-1}, \,z_{m}= x_{m},  \quad m \, \text{is odd}
\end{array}
\end{equation}}

since the weight vectors are
$$
\begin{array}{l}
(x_1+\i x_2)^{k_1}(x_1-\i x_2)^{r_1}\dots (x_{2l-1} -\i x_{2l})^{r_l}, \, \sum k_i+ \sum r_i = n,\;\;  \text{for} m=2l,\\ \\
  (x_1+\i x_2)^{k_1}(x_1-\i x_2)^{r_1}\dots (x_{2l-1} -\i x_{2l})^{r_l}x_{2l+1}^{k_0},\, \sum k_i+ \sum r_i = n,\;\;  \text{for} m=2l+1.
\end{array}
$$

Recall that to get a real representation we have to consider powers of $z_1, \i z_1, \dots, z_m, \i z_m$ and hence $d=\dim W_n = 2\binom{n+m-1}{n}$.

Note that if $m=2l+1$, $\sog(m)$ is a compact real form of the $B_l$-type complex Lie algebra $\sog(2l+1, \CC)$ and for $m=2l$ the corresponding type is $D_l$. Therefore, we will study these cases separately. Also recall that we have the following isomorphism $\sog(3)\simeq \sug(2)$ and therefore we may assume that $l \ge 2$.

We are using the following notation for the elements of $\sog(2l+1, \CC)$ as $(l+1) \times (l+1)$-block matrices where the first $l$ blocks are $2 \times 2$ and there is one more row and column:
\small{\begin{equation}\label{notmat}
 X = \left[\begin{array}{ccc:c} A_{1,1} &  \dots & A_{1,l} &  A_{1,l+1} \\
 & \ddots && \vdots \\ A_{l,1} & \dots& A_{l,l}& A_{l,l+1} \\ \hdashline A_{l+1,1} & \dots & A_{l+1,l}& 0
 \end{array} \right].
 \end{equation}}
That is $A_{i,j}$ is a $2 \times 2$ matrix if $i,j \le l$ and $A_{i,l+1}$ is a column matrix with 2 rows. Recall that $A_{j,i}=-A_{i,j}^t$.
Using \cite[Example 2, pp. 63]{Kn2}, we can choose as Cartan subalgebra
$$ \hg=\left\{ H \in \sog(2l+1, \CC): \, H= \left[ \begin{smallmatrix} \begin{array}{ccc:c}
    A_{1,1} & & & \\ &\ddots & & \\ && A_{l,l} & \\ \hdashline &&& 0 \end{array}
  \end{smallmatrix} \right], \, \text{ where } A_{i,i}=\left[ \begin{smallmatrix}  0 & \i h_i\\- \i h_i & 0
\end{smallmatrix}\right]\right\}.$$
For each $1 \le i \le m$, let $e_i \in \hg^*$ be defined by
$e_i(H)=h_i$ for any $H \in \hg$ as above. Then the system of roots is given by
$$\Delta =\{\pm e_i \pm e_j,\, \, i\ne j\} \cup \{\pm e_k \}.$$
We choose an order so that the positive roots are
$$\Delta^+ =\{\alpha_{i,j}^\pm=e_i \pm e_j,\, \, i< j\} \cup \{\alpha_k=e_k \}, \text{ and let } \Pi=\{\alpha_{i,i+1}^-, e_l,\,\, 1\le i\le l-1\},$$
\noindent be the simple ones.

The corresponding roots vectors $X_{\pm \alpha_{ij}^\pm}$, for $\pm \alpha_{i,j}^\pm$ $i<j$, have all its block-entries $0$ except for $A_{i,j}$ and $A_{j,i}$.
We obtain therefore a basis of $\sog(m)$ as in (\ref{genu}) given explicitly by
$$
\{H^{\alpha_{i,i+1}^-}, H^{e_l},  X^{\alpha^\pm_{k,j}}, X^{\alpha_r}, Y^{\alpha^\pm_{k,j}}, Y^{\alpha_r}, \; i,r \le l, k<j \le l\}.
$$
\small{ $$ \begin{array}{l}
H^{\alpha_{i,i+1}^-}= \left[\begin{smallmatrix} \ddots & & &   \\ & A_{i,i}& & \\ & & -A_{i+1,i+1}& \\ &&& \ddots \end{smallmatrix}\right],\, i \le l-1, \quad H^{e_l}= \left[\begin{smallmatrix} \ddots & &    \\ & A_{l,l}&  \\  &&& 0 \end{smallmatrix}\right],\, \text{ where }\,
A_{r,r}=\left[ \begin{smallmatrix} & -1\\1 &
\end{smallmatrix}\right],\\
X^{\alpha^\pm_{k,j}} = \left[\begin{smallmatrix} \ddots & & & &  \\ & & A_{k,j}^\pm& & \\ & A_{j,k}^\pm &&& \\ &&& \ddots \end{smallmatrix}\right], \, A_{k,j}^-=\left[ \begin{smallmatrix}2 & \\& 2
\end{smallmatrix}\right],\, A_{k,j}^+=\left[ \begin{smallmatrix} 2 & \\ & -2 \end{smallmatrix}\right],\,  k < j \ne l+1, \\
Y^{\alpha^\pm_{k,j}} =  \left[\begin{smallmatrix} \ddots & & & &  \\ & & A_{k,j}^\pm& & \\ & A_{j,k}^\pm &&& \\ &&& \ddots \end{smallmatrix}\right],\,  A_{k,j}^-=\left[ \begin{smallmatrix} & -2 \\ 2 &
\end{smallmatrix}\right],\, A_{k,j}^+=\left[ \begin{smallmatrix}  & 2 \\2 &  \end{smallmatrix}\right],\,  k < j \ne l+1, \\
 X^{\alpha_r} = \left[\begin{smallmatrix} \ddots & & & \vdots   \\ & & & A_{r,l+1} \\ & && \vdots \\ \dots & A_{l+1,r} && \end{smallmatrix}\right],
 A_{r,l+1}=\left[ \begin{smallmatrix} 2\\0 \end{smallmatrix}\right], \,
  Y^{\alpha_r} = \left[\begin{smallmatrix} \ddots & & & \vdots   \\ & & & A_{r,l+1} \\ & && \vdots \\ \dots & A_{l+1,r} && \end{smallmatrix}\right],
A_{r,l+1}=\left[ \begin{smallmatrix} 0\\2 \end{smallmatrix}\right], 1\le r\le l.
 \end{array}$$}

Straightforward calculations show that the non-trivial action of $\sog(2l+1)$ on $W_n$  can be obtained from
{\small $$ \begin{array}{l}
H^{\alpha_{i,i+1}^-}\cdot z_r= \left\{ \begin{array}{ll}
             -\i z_r, &  r=2i-1,2i+2,  \\
             \i z_r & r=2i,2i+1,
                  \end{array}  \right. i<l, \quad
H^{e_l}\cdot z_r= \left\{ \begin{array}{ll}
             -\i z_r, &  r=2l-1,  \\
             \i z_r & r=2l,
               \end{array}  \right. \\ \\
X^{\alpha^+_{k,j}}\cdot z_r= \left\{ \begin{array}{ll}
             -2 z_{2j}, & r=2k-1, \\
             -2 z_{2j-1}, & r=2k, \\
             2 z_{2k}, & r=2j-1, \\
             2 z_{2k-1}, & r=2j, \\
           \end{array} \right.
X^{\alpha^-_{k,j}}\cdot z_r= \left\{ \begin{array}{ll}
             -2 z_{2j-1}, & r=2k-1, \\
             -2 z_{2j}, & r=2k, \\
             2 z_{2k-1}, & r=2j-1, \\
             2 z_{2k}, & r=2j, \\ \\
           \end{array} \right. \\ 
Y^{\alpha^+_{k,j}}\cdot z_r= \left\{ \begin{array}{ll}
             -2\i z_{2j}, & r=2k-1, \\
             2\i z_{2j-1}, & r=2k, \\
             2\i z_{2k}, & r=2j-1, \\
             -2\i z_{2k-1}, & r=2j, \\
           \end{array} \right.
Y^{\alpha^-_{k,j}}\cdot z_r= \left\{ \begin{array}{ll}
             -2\i z_{2j-1}, & r=2k-1, \\
             2\i z_{2j}, & r=2k, \\
             -2\i z_{2k-1}, & r=2j-1, \\
             2\i z_{2k}, & r=2j, \\ \\
           \end{array} \right. \\ 
X^{\alpha_i}\cdot z_r= \left\{ \begin{array}{ll}
             -2 z_m,& r=2i-1,2i, \\
              z_{2i-1}+z_{2i},& r=2l+1,
           \end{array} \right.
Y^{\alpha_i}\cdot z_r= \left\{ \begin{array}{ll}
             -2\i z_m, & r=2i-1, \\
              2\i z_m, & r=2i, \\
             -\i z_{2i-1}+\i z_{2i}, & r =2l+1.
           \end{array} \right. 
\end{array}
$$}
Let us fix a basis of $W_n$,
$$\mathcal{B}_1=\{ s p_{j_1,\dots, j_m}=s z_1^{j_1}\dots z_m^{j_m}, \quad s=1,\i, \;j_i \in \NN_0, \, j_i+\dots +j_m=n\},$$
\noindent where $z_i$ are defined in (\ref{zetas}) (see (\ref{basewm})) and let $V_1$ the subspace generated by $\mathcal{S}$,
$$\mathcal{S}=\{s z_j^n, \; s=1,\i,\; j \le 2l\},$$
\noindent and $V_2$ the subspace generated by $\mathcal{B}_1 \smallsetminus \mathcal{S}$. It is easy to see that the same arguments as in $\sug(m)$ case apply here to assert that the inner product that makes $\mathcal{B}_1$ an orthonormal basis of $W_n$ satisfies the hypotheses of Theorem \ref{theTheo}.
\

To tackle the  $\sog(2l)$ case, first note that the elements of  $\sog(2l)$ can be realized in much the same way as in $\sog(2l+1)$, we just have to erase the $l+1$ column and row. In particular, the root structure of $\sog(2l)$ can be read off from the one we have constructed for $\sog(2l+1)$ (see (\cite[Example 4 pp. 64]{Kn2}). In fact, one can choose the Cartan subalgebra so that the roots and root vectors of $\sog(2l)$ correspond to the ones that can be restricted from $\sog(2l+1)$. Explicitly, the set of roots is $\Delta = \{\pm e_k \pm e_j, \; j\le l\}$ and the corresponding root vectors are obtained from the ones in $\sog(2l+1)$ by erasing the last column and row. Hence, everything works in the same way.

\begin{theorem}
  Let $(W_n, \pi_n)$ be the standard real representation of $\sog(m)$ on the space of complex-valued homogeneous polynomials of degree $n$ on $\RR^m$. Let $(\lgo, \lb) = (\RR Z \oplus \sog(m)) \ltimes W_n$ be the Lie algebra where $[Z, \sog(m)]=0$ and $Z$ acts as the identity on $W_n$.
  Then $\lgo$ admits an inner product with negative Ricci curvature for all $n, m \ge 2$.
\end{theorem}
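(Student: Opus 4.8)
The strategy is to check that the compact Lie algebra $\ug=\sog(m)$, the representation $(W_n,\pi_n)$, the decomposition $W_n=V_1\oplus V_2$ with $V_1=\Span\,\mathcal{S}$ and $V_2=\Span(\mathcal{B}_1\smallsetminus\mathcal{S})$, and the inner product making $\mathcal{B}_1$ orthonormal, satisfy the three hypotheses of Theorem \ref{theTheo}; the conclusion then follows at once. Since the matrix realization of the root vectors and the explicit formulas for the action on the generators $z_r$ of (\ref{zetas}) have already been recorded, the proof reduces to bookkeeping with those formulas. I would treat $m=2l+1$ (type $B_l$) as the main case and obtain $m=2l$ (type $D_l$) by deleting the short roots $\alpha_r=e_r$ together with the last row and column, as indicated above.

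Conditions (i) and (ii) are quick. Each $H^\alpha$, $\alpha\in\Pi$, multiplies every $z_r$ by $\pm\i$, so every monomial in $\mathcal{B}_1$ is an eigenvector of $\pi_n(H^\alpha)$ with purely imaginary eigenvalue; hence $\pi_n(H^\alpha)$ maps the orthonormal basis $\mathcal{B}_1$ to itself up to sign by interchanging $p_J$ with $\i p_J$, so it is skew-symmetric and preserves both $V_1$ and $V_2$. This gives (i). For (ii), since $\ug$ acts by derivations one has $\pi_n(X)z_r^n=n\,z_r^{n-1}(X\cdot z_r)$ for $X=X^\alpha,Y^\alpha$, and the action formulas show that each such $X$ moves at least one generator $z_r$ with $r\le 2l$; as $n\ge 2$ the image is a nonzero mixed monomial lying in $V_2$, so $\pi_n(X)_{|_{V_1}}\ne 0$.

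The real content is condition (iii). Expanding $\tr\pi_n(Y)_{|_{V_1}}^t\pi_n(X)_{|_{V_1}}$ as in (\ref{ricN}), it is enough to show that for every spanning vector $s\,z_r^n$ of $V_1$ ($s=1,\i$, $r\le 2l$) and every $p\in\mathcal{B}_1\smallsetminus\mathcal{S}$ the product $\la\pi_n(X)\,s z_r^n,p\ra\,\la\pi_n(Y)\,s z_r^n,p\ra$ vanishes whenever $X\ne Y$. I would split according to the types of $X,Y$. If one is an $X^\alpha$ and the other a $Y^\beta$ (allowing $\beta=\alpha$), I use that the coefficients in $X^\alpha\cdot z_r$ are real while those in $Y^\alpha\cdot z_r$ are imaginary; hence $\pi_n(X^\alpha)$ preserves each of the real and imaginary monomial subspaces $\Span\{p_J\}$ and $\Span\{\i p_J\}$, whereas $\pi_n(Y^\alpha)$ interchanges them. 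Since $s\,z_r^n$ lies in one of these two subspaces, the images $\pi_n(X)\,s z_r^n$ and $\pi_n(Y)\,s z_r^n$ lie in complementary subspaces and cannot both pair nontrivially with the same $p$. If instead $X$ and $Y$ are of the same type but attached to distinct roots $\alpha\ne\beta$, then for $r\le 2l$ each of $\pi_n(X)z_r^n$ and $\pi_n(Y)z_r^n$ is a scalar multiple of a single monomial $z_r^{n-1}z_s$, so the product vanishes as soon as the two target generators $z_s$ differ.

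Thus the crux, and the step I expect to require the most care, is the combinatorial claim that, for a fixed $r\le 2l$, distinct positive root vectors send $z_r$ to distinct generators $z_s$. I would verify this directly from the action formulas: for $r=2k-1$, the vectors $X^{\alpha^{+}_{k,j}},X^{\alpha^{-}_{k,j}}$ ($j>k$) send $z_{2k-1}$ to $z_{2j},z_{2j-1}$, the vectors $X^{\alpha^{+}_{i,k}},X^{\alpha^{-}_{i,k}}$ ($i<k$) send it to $z_{2i},z_{2i-1}$, and $X^{\alpha_k}$ sends it to $z_m$; a parity-and-range count shows these targets are pairwise distinct (and likewise for $r=2k$ and for the $Y$-type vectors). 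It is exactly here that the choice $\mathcal{S}=\{s\,z_j^n:\,j\le 2l\}$ matters: excluding $z_m^n$ in the odd case guarantees that every $\pi_n(X)$ sends each spanning vector of $V_1$ to a \emph{single} basis monomial, since $X^{\alpha_i}\cdot z_m$ and $Y^{\alpha_i}\cdot z_m$ are the only actions producing a sum of two monomials. Granting the claim, the targets $z_r^{n-1}z_s$ and $z_r^{n-1}z_{s'}$ are distinct orthonormal basis vectors, so the product vanishes termwise and (iii) holds. With (i)--(iii) verified, Theorem \ref{theTheo} provides an inner product on $\lgo$ with $\Ricci<0$, and the $D_l$ case follows verbatim, establishing the result for all $n,m\ge 2$.
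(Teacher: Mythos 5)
Your proposal is correct and takes essentially the same approach as the paper: it verifies the hypotheses of Theorem \ref{theTheo} for the same decomposition $W_n=V_1\oplus V_2$ (with $V_1=\Span\,\mathcal{S}$, $\mathcal{S}=\{s z_j^n,\ s=1,\i,\ j\le 2l\}$) and the same orthonormal basis $\mathcal{B}_1$, using the real/imaginary monomial splitting for pairs $X^\alpha$ versus $Y^\beta$ and the distinct-target argument for same-type pairs, which is exactly the argument the paper invokes by referring back to the $\sug(m)$ case. The only difference is one of exposition: you spell out the combinatorial bookkeeping (including the $B_l$ versus $D_l$ distinction and the role of excluding $z_m^n$ in the odd case) that the paper compresses into ``the same arguments as in the $\sug(m)$ case apply here.''
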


\begin{remark}
   We note that the root vectors used here are not the ones satisfying (\ref{sperootvect}) since  we have changed certain constants in order to simplify some calculations and expressions. It is easy to see that this is an equivalent realization.
\end{remark}

\subsection{Polynomial representations of $\spg(m)$}

Note that  $\spg(m)$ is a compact real form of the $C_m$-type complex Lie algebra $\spg(m, \CC)$.
Following \cite[Example 3, pp. 64]{Kn2}, we can choose as Cartan subalgebra
$$ \hg=\{ H \in \slg(2m, \CC): \, H= \Diag (h_1,\dots, h_m, -h_1,\dots,-h_m) \}.$$
For each $1 \le i \le m$, let $e_i \in \hg^*$ be defined by
$e_i(H)=h_i$ for any $H \in \hg$ as above. Then the system of roots is given by
$$\Delta =\{\pm e_i \pm e_j,\, \, i\ne j\} \cup \{\pm 2 e_k \}.$$
We choose an order so that the positive roots are
$$\Delta^+ =\{\alpha_{i,j}^\pm=e_i \pm e_j,\, \, i< j\} \cup \{\alpha_k=2 e_k \}, \text{ and let } \Pi=\{\alpha_{i,i+1}^-, \alpha_m,\,\, 1\le i\le m-1\},$$
\noindent be the simple ones.
Following \cite[Example 3, pp. 64]{Kn2}) we obtain a basis of $\spg(m)$ as in (\ref{genu}) given explicitly by
$$
\{H^{\alpha_{i,i+1}^-}, H^\alpha_{m},  X^{\alpha^\pm_{k,j}}, X^{\alpha_r}, Y^{\alpha^\pm_{k,j}}, Y^{\alpha_r}, \; i \le m-1,\,r \le m, k<j \le m\},
$$
\noindent where
\small{ $$ \begin{array}{l}
H^{\alpha_{i,i+1}^-}= \i E_{i,i}- \i E_{i+1,i+1} - \i E_{m+i,m+i}+\i E_{m+i+1,m+i+1} ,\, \quad H^{\alpha_m}= \i E_{m,m}-\i E_{2m,2m},\\ \\
X^{\alpha^-_{k,j}} = E_{k,j}-E_{m+j,m+k} - E_{j,k} + E_{m+k,m+j}, \, X^{\alpha^+_{k,j}} = E_{k,m+j}+E_{j,m+k} - E_{m+k,j} - E_{m+j,k}, \\ \\
Y^{\alpha^-_{k,j}} = \i E_{k,j} - \i E_{m+j,m+k} + \i E_{j,k} - \i E_{m+k,m+j}, \\ \\ Y^{\alpha^+_{k,j}} = \i E_{k,m+j}+ \i E_{j,m+k} + \i E_{m+k,j} + \i E_{m+j,k},    \\ \\
 X^{\alpha_r} = E_{r,m+r}-E_{m+r,r}, \quad  Y^{\alpha_r} = \i E_{r,m+r} + \i E_{m+r,r}  .
 \end{array}$$}

Let $(W_n, \pi_n)$ be the standard representation of $\spg(m)$ on the space of complex homogeneous polynomials of degree $n$ derived from the standard action of the group $\Sp(m,\CC)$. In this case we have that $(W_n, \pi_n)$ or $\Sym^n(\CC^{2m})$ is the irreducible representation of $\spg(m,\CC)$ of maximal weight $n\,e_1$ (see \cite{FH} \S 17.2). As in the previous cases, we will still denote by $(W_n, \pi_n)$ the corresponding real representation of $\ug = \spg(m)$.
Let us fix a basis of $W_n$,
\begin{equation}
\mathcal{B}_1= \{p_{j_1,\dots,j_{2m}},\; \i p_{j_1,\dots,j_{2m}}, \quad j_i \in \NN_0,\;\; j_1+\dots+j_{2m} =n \},
\end{equation}
where $p_{j_1,\dots,j_{2m}} = z_1^{j_1}\dots z_{2m}^{j_{2m}} \in \mathcal{P}_{n}(\CC^{2m})$.

Straightforward calculations show that the non-trivial action of $\spg(m)$ on $W_n$  can be obtained from
{\small $$ \begin{array}{l}
H^{\alpha_{i,i+1}^-}\cdot z_r= \left\{ \begin{array}{ll}
             -\i z_r, &  r=i,m+i+1,  \\
             \i z_r & r=i+1,m+i,
                  \end{array}  \right., \quad
H^{\alpha_m}\cdot z_r= \left\{ \begin{array}{ll}
             -\i z_r, &  r=m,  \\
             \i z_r & r=2m,
               \end{array}  \right. \\ \\
X^{\alpha^+_{k,j}}\cdot z_r= \left\{ \begin{array}{ll}
              z_{j}, & r=m+k, \\
             - z_{m+j}, & r=k, \\
               z_{k}, & r=m+j, \\
              -z_{m+k}, & r=j, \\
           \end{array} \right.
X^{\alpha^-_{k,j}}\cdot z_r= \left\{ \begin{array}{ll}
              z_{j}, & r=k, \\
              z_{m+j}, & r=m+k, \\
             -z_{k}, & r=j, \\
              - z_{m+k}, & r=m+j, \\ \\
           \end{array} \right. \\ 
Y^{\alpha^+_{k,j}}\cdot z_r= \left\{ \begin{array}{ll}
             -\i z_{j}, & r=k, \\
             -\i z_{m+j}, & r=m+k, \\
             -\i z_{k}, & r=j, \\
             -\i z_{m+k}, & r=m+j, \\
           \end{array} \right.
Y^{\alpha^-_{k,j}}\cdot z_r= \left\{ \begin{array}{ll}
             -\i z_{j}, & r=k, \\
             \i z_{m+j}, & r=m+k, \\
             -\i z_{k}, & r=j, \\
             \i z_{m+k}, & r=m+j, \\ \\
           \end{array} \right. \\
X^{\alpha_i}\cdot z_r= \left\{ \begin{array}{ll}
             -z_{m+i},& r=i, \\
              z_{i},& r=m+i,
           \end{array} \right.
Y^{\alpha_i}\cdot z_r= \left\{ \begin{array}{ll}
             -\i z_{m+i}, & r=i, \\
              -\i z_i, & r=m+i.
           \end{array} \right.
\end{array}
$$}
Let $V_1$ be the subspace generated by $\mathcal{S}$,
$$\mathcal{S}=\{s z_j^n, \; s=1,\i,\; j \le 2m\},$$
\noindent and $V_2$ the subspace generated by $\mathcal{B}_1 \smallsetminus \mathcal{S}$. In the same way as in the case of $\sug(m)$ one can see that the the inner product that makes $\mathcal{B}_1$ an orthonormal basis of $W_n$ satisfies the hypotheses of Theorem \ref{theTheo}.

\

\begin{theorem}
  Let $(W_n, \pi_n)$ be the standard real representation of $\spg(m)$ on the space of complex homogeneous polynomials of degree $n$. Let $(\lgo, \lb) = (\RR Z \oplus \spg(m)) \ltimes W_n$ be the Lie algebra where $[Z, \spg(m)]=0$ and $Z$ acts as the identity on $W_n$.
  Then $\lgo$ admits an inner product with negative Ricci curvature for all $n, m \ge 2$.
\end{theorem}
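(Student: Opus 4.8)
The plan is to apply Theorem \ref{theTheo} directly to the compact simple Lie algebra $\ug = \spg(m)$, the representation $(W_n, \pi_n)$, and the decomposition $W_n = V_1 \oplus V_2$ fixed above, equipped with the inner product making $\mathcal{B}_1$ orthonormal. All the necessary data has already been assembled: the realization of the basis $\{H^\alpha, X^\alpha, Y^\alpha\}$ of $\spg(m)$ as in (\ref{genu}), the explicit formulas for the action on the variables $z_r$, and the splitting $V_1 = \Span\,\mathcal{S}$, $V_2 = \Span(\mathcal{B}_1\smallsetminus\mathcal{S})$. So it only remains to check that this data satisfies the hypotheses of Lemma \ref{lema0} together with conditions i)--iii) of Theorem \ref{theTheo}; the conclusion of that theorem then yields the desired Ricci negative inner product.

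First I would record that each $H^\alpha$, $\alpha \in \Pi$, acts on a monomial $p_{j_1,\dots,j_{2m}}$ by multiplication by an integer multiple of $\i$ (the sum of the $\pm\i$ weights of the variables it contains); hence in the real orthonormal basis $\mathcal{B}_1$ it is block diagonal, with $2\times 2$ skew-symmetric blocks $\left[\begin{smallmatrix} 0 & -c \\ c & 0\end{smallmatrix}\right]$ acting on each pair $\{p, \i p\}$. This gives i) and simultaneously shows that $V_1$ and $V_2$ are $H^\alpha$-invariant. For the Lemma \ref{lema0} hypothesis and for ii), I would use that every root vector $X \in \{X^\alpha, Y^\alpha\}$ sends a variable $z_r$ either to $0$ or to a scalar times a single \emph{different} variable $z_{r'}$, $r' \ne r$; thus $\pi_n(X) z_j^n = n\,z_j^{n-1}\,\pi_n(X)z_j$ is a nonzero scalar times a monomial $z_j^{n-1} z_{j'}$ with $j' \ne j$, which (because $n \ge 2$) lies in $\mathcal{B}_1\smallsetminus\mathcal{S}$. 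Hence $\pi_n(X)(V_1)\subset V_2$ and $\pi_n(X)_{|_{V_1}}\ne 0$ for each such $X$.

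The core of the argument is condition iii), which, following the $\sug(m)$ computation, I would reduce to a term-by-term vanishing of
\[
\tr \pi_n(Y)_{|_{V_1}}^t \pi_n(X)_{|_{V_1}} = \sum_{\substack{1\le k\le 2m\\ p \in \mathcal{B}_1\smallsetminus\mathcal{S}}} \big( \la \pi_n(X)z_k^n, p\ra\la \pi_n(Y)z_k^n,p\ra + \la \pi_n(X)\i z_k^n,p\ra\la \pi_n(Y)\i z_k^n, p\ra\big),
\]
for $X\ne Y$ in $\{X^\alpha, Y^\alpha\}$. There are two cases. If $X$ and $Y$ have opposite type (one an $X^\alpha$, the other a $Y^\alpha$), then $\pi_n(X)$ preserves the two orthogonal subspaces spanned respectively by the real monomials and by their $\i$-multiples, while $\pi_n(Y)$ interchanges them; so in every term at least one of the two inner products vanishes. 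If $X$ and $Y$ are of the same type, then $\pi_n(X)z_k^n$ and $\pi_n(Y)z_k^n$ are scalar multiples of $z_k^{n-1}z_{j'}$ and $z_k^{n-1}z_{j''}$, and it suffices to show $j'\ne j''$, i.e.\ that two distinct positive root vectors of the same type move a given variable $z_k$ to two different variables.

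The main obstacle is precisely this index bookkeeping. Reading off the action formulas, for a fixed source index $k$ with $1 \le k \le m$ the targets produced by $X^{\alpha^-_{k,j}}$ and $X^{\alpha^-_{i,k}}$ fill out $\{1,\dots,m\}\smallsetminus\{k\}$, while $X^{\alpha^+_{k,j}}$, $X^{\alpha^+_{i,k}}$ and $X^{\alpha_k}$ fill out $\{m+1,\dots,2m\}$, so the $2m-1$ root vectors acting nontrivially on $z_k$ hit the $2m-1$ targets in $\{1,\dots,2m\}\smallsetminus\{k\}$ \emph{without repetition}; an analogous count for a source $z_{m+k}$ gives $\{1,\dots,2m\}\smallsetminus\{m+k\}$, and the same tallies hold verbatim for the $Y$-type vectors. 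Consequently distinct same-type root vectors produce orthogonal images from each source $z_k^n$, so every term in the sum vanishes and iii) holds. With i)--iii) and the Lemma \ref{lema0} hypotheses verified, Theorem \ref{theTheo} applies and yields an inner product on $\lgo$ with negative Ricci curvature for all $n, m\ge 2$.
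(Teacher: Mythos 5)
Your proof is correct and is essentially the paper's own argument: the paper's proof of this theorem consists precisely of the setup you use (the matrices realizing the basis of $\spg(m)$, the action formulas, $V_1=\Span\,\mathcal{S}$, $V_2=\Span(\mathcal{B}_1\smallsetminus\mathcal{S})$) plus the one-line assertion that ``in the same way as in the case of $\sug(m)$'' the inner product making $\mathcal{B}_1$ orthonormal satisfies the hypotheses of Theorem \ref{theTheo}, and your verification of i)--iii) via the single-variable-target, no-repetition bookkeeping is exactly that computation carried out in detail. One remark: your $Y$-type tally is the correct one for the action determined by the displayed matrices (e.g.\ $Y^{\alpha^+_{k,j}}=\i E_{k,m+j}+\i E_{j,m+k}+\i E_{m+k,j}+\i E_{m+j,k}$ sends $z_k\mapsto -\i\, z_{m+j}$), whereas the paper's printed action table for $Y^{\alpha^+_{k,j}}$ contains a typo (it lists $z_k\mapsto -\i\, z_j$, which would collide with the target of $Y^{\alpha^-_{k,j}}$ and break term-by-term orthogonality); read against the matrices your argument stands as written, and even against the misprinted table condition iii) still holds because the offending terms cancel in pairs across the sources $z_k^n,\i z_k^n, z_j^n, \i z_j^n$ versus $z_{m+k}^n,\i z_{m+k}^n,z_{m+j}^n,\i z_{m+j}^n$.
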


\subsection{More examples starting from a non compact $\ggo_0$}

   When $\ggo_0$ is a semisimple Lie algebra of complex matrices stable under $\theta$ where $\theta(X)=-\bar{X}^t$ and $\ggo_0 = \kg \oplus \pg$ is the corresponding Cartan decomposition such that $\kg \cap \i \pg =0$,  one obtains that its complexification $\ggo = (\kg \oplus \pg)^\CC$ is also semisimple and $\ug=\kg \oplus \i\pg$ is a compact real form of $\ggo$. As we have already mentioned, in this case $\ggo_0$ is usually called the non-compact dual of $\ug$.  In particular, any finite-dimensional complex representation of $\ggo_0$ gives rise to a representation of $\ug$ and viceversa by using this decomposition (see \cite{Kn} pag. 443).
Hence, we can follow the same construction using $\ggo_0$ instead of $\ug$, both realized as subalgebras of complex matrices. That is, to start from the basis given by the roots of $\ggo$, consider the semidirect product $\lgo= (\RR Z \oplus \ggo_0) \ltimes V$, apply the degeneration and get to a solvable Lie algebra $\lgo_\infty$ which, for some representations $V$ could have negative Ricci operator. The calculations are more involved since the operators $\ad H_\alpha$ are no longer skew-symmetric and the basis of $\ngo$ is no longer nice.

In the particular case when $\ggo_0=\slg(m,\RR)$ and $V=\mathcal{P}_{m}(\CC^{m})$, viwed as a real vector space, we can see that all this procedure works for the same choice of $V_1$.

\begin{proposition}\label{Gengl}
 Let $(W_n,\pi_n)$ be the usual real representation of $\slg(m,\RR)$ on the space of complex homogeneous polynomials of degree $n$ in $m$ variables, extended to $\glg(m,\RR)$ by letting the center acts as multiples of the identity. Hence the Lie algebra $\glg(m,\RR)\ltimes V_n$ admits an inner product with negative Ricci curvature for any $m$ and $n \ge 2$.
\end{proposition}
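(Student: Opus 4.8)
The plan is to carry over, almost verbatim, the two–step scheme of the compact case to the split real form. I realize $\slg(m,\RR)=\kg\oplus\pg$ from the root data of $\ggo=\slg(m,\CC)$ as in Section~\ref{root}, so that $\kg=\Span\{X^\alpha:\alpha\in\Delta^+\}$ and $\pg=\Span\{H_\alpha,\i Y^\alpha:\alpha\in\Delta^+\}$, and let $W_n=\mathcal P_n(\CC^m)$ be the same polynomial module, now viewed as an $\slg(m,\RR)$–module, with $\lgo=\glg(m,\RR)\ltimes W_n=(\RR Z\oplus\slg(m,\RR))\ltimes W_n$. First I would degenerate $\lgo$ to a solvable Lie algebra $\lgo_\infty$ exactly as in Lemma~\ref{lema0}, and then exhibit an inner product on $\lgo_\infty$ with $\Ricci<0$; Proposition~\ref{limit} then returns negativity to $\lgo$. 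The one structural novelty is that the Cartan elements $H_\alpha\in\pg$ act by symmetric operators with real eigenvalues instead of the skew operators $H^\alpha=\i H_\alpha$ of $\sug(m)$: concretely $[H_\alpha,X^\beta]=-\beta(H_\alpha)\,\i Y^\beta$ and $[H_\alpha,\i Y^\beta]=-\beta(H_\alpha)\,X^\beta$, so $\ad H_\alpha$ interchanges $X^\beta\leftrightarrow\i Y^\beta$ with equal signs.

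Running the degeneration $\phi_t$ of Lemma~\ref{lema0} with $X^\alpha,\i Y^\alpha$ scaled by $t$, the $H_\alpha$ fixed, and $V_1,V_2$ scaled by $t/\rho$ and $t^2$, a computation identical to Lemma~\ref{lema0} shows that every bracket internal to the semisimple part — the $[\kg,\kg]$, $[\pg,\pg]$ and off–Cartan $[\kg,\pg]$ terms, including $[X^\alpha,\i Y^\alpha]=-2H_\alpha$ — is multiplied by a negative power of $t$ and dies, while the surviving brackets of $\mu=\lim_{t\to\infty}\phi_t\cdot\lb$ are the Cartan action of $H_\alpha$ on $\ngo_\ug=\Span\{X^\alpha,\i Y^\alpha\}$ together with $\mu(Z,v)=v$, $\mu(H_\alpha,v)=\pi(H_\alpha)v$ and $\mu(X,v)=\rho\,\pi(X)v$ for $X\in\{X^\alpha,\i Y^\alpha\}$ and $v\in V_1$. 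Hence $\lgo_\infty=\ag\oplus\ngo$, with $\ag=\Span\{Z,H_\alpha\}$ abelian and nilradical $\ngo=\ngo_\ug\oplus W_n$ of exactly the two–step shape of the compact limit: $\ngo_\ug$ is abelian and the root vectors map $V_1$ into $V_2$.

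Next I would compute $\Ricci_\mu$ for the inner product making the root basis $\{X^\alpha,H_\alpha,\i Y^\alpha:\alpha\in\Delta^+\}$ and the monomial basis $\mathcal B_1$ of $W_n$ orthonormal, with $V_1$ rescaled by a small $\rho$. Because each $\pi(H_\alpha)$ is diagonal in $\mathcal B_1$ and $\ad H_\alpha$ is symmetric on $\ngo_\ug$, and these commute with $\ad Z=\Id_{W_n}$, the operators $\ad A$, $A\in\ag$, form a commuting family of symmetric, hence normal, operators, so~(\ref{ricsolnor}) applies and $\la\Ricci_\mu\ag,\ngo\ra=0$. The key gain over the compact case is that the symmetric (rather than skew) Cartan action makes $\la\Ricci_\mu H_\alpha,H_\alpha\ra=-\tr(\ad H_\alpha|_\ngo)^2<0$ and $\la\Ricci_\mu Z,Z\ra=-\dim W_n<0$, so $\Ricci_\mu|_\ag$ is already negative definite and the perturbation of the inner product needed in Theorem~\ref{theTheo} becomes unnecessary. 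On $\ngo$ the computation repeats that of Theorem~\ref{theTheo} verbatim: the $V_1$–block is $-\unm\rho^2\sum_X\pi(X)_{|_{V_1}}^t\pi(X)_{|_{V_1}}-\dim W_n\,\Id_{V_1}$, negative definite for every $\rho$; the $V_2$–block is $\unc\rho^2\sum_X\pi(X)_{|_{V_1}}\pi(X)_{|_{V_1}}^t-\dim W_n\,\Id_{V_2}$, negative definite for small $\rho$; and on $\ngo_\ug$ the diagonal entries are $-\unm\rho^2\tr\pi(X)_{|_{V_1}}^t\pi(X)_{|_{V_1}}<0$, with $X$ now ranging over $\{X^\alpha,\i Y^\alpha\}$.

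The hard part will be verifying the analogue of hypothesis~(iii) of Theorem~\ref{theTheo}, namely $\tr\pi(Y)_{|_{V_1}}^t\pi(X)_{|_{V_1}}=0$ for $X\ne Y$ in $\{X^\alpha,\i Y^\alpha:\alpha\in\Delta^+\}$. Let $J$ denote multiplication by $\i$ on $W_n$; it is orthogonal, skew, preserves $V_1$, and commutes with the complex–linear action $\pi$. Using $\pi(\i Y^\gamma)=J\pi(Y^\gamma)$ and $J^tJ=\Id$, the pairs $(X^\alpha,X^\beta)$ and $(\i Y^\alpha,\i Y^\beta)$ reduce to the identity already checked in Section~\ref{repre} for $\sug(m)$, and the mixed pairs $(X^\alpha,\i Y^\beta)$ with $\alpha\ne\beta$ vanish because $\pi(X^{\alpha_{i,j}})$ and $\pi(Y^{\alpha_{k,l}})$ produce different monomials. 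The delicate case is the partner pair $(X^\alpha,\i Y^\alpha)$: unlike the compact situation, where orthogonality of $z^J$ and $\i z^J$ forced the trace to vanish term by term, here $\la\pi(\i Y^{\alpha_{i,j}})z_k^n,\pi(X^{\alpha_{i,j}})z_k^n\ra$ is nonzero, and one must observe the global cancellation between the contributions of $k=i$ and $k=j$ (the two summands are $\mp n^2$). Granting this, $\Ricci_\mu<0$ for small $\rho$ and Proposition~\ref{limit} completes the proof. I would finally remark that this whole construction is the real slice of the analytic family joining $\sug(m)$ to $\slg(m,\RR)$ provided by Weyl's unitary trick, which is the continuity underlying the statement.
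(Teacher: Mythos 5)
Your proposal is correct, and it takes a genuinely different route from the paper's official one. The paper explicitly \emph{omits} the proof of Proposition~\ref{Gengl}: for $m\ge 3$ it derives the statement from the more general Theorem~\ref{ssnc}, feeding in the Dotti--Leite Ricci-negative metrics on $\slg(m,\RR)$ as hypothesis (4) there, with $\ngo=W_n$ abelian and $Z$ acting as the identity; the degeneration route you follow is only sketched in the surrounding discussion and carried out concretely just for $\glg(2,\RR)\ltimes W_2$ in Example~\ref{exegl2v1}. What your uniform degeneration argument buys is exactly the case the citation cannot reach: for $m=2$ Theorem~\ref{ssnc} is inapplicable, since $\slg(2,\RR)$ admits no Ricci-negative inner product (see \cite{M}), so there an argument of your type is genuinely needed; your route also produces the inner product explicitly, which the paper itself flags as the advantage of this approach. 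You correctly identify the two new phenomena relative to Theorem~\ref{theTheo}: the Cartan directions $H_\alpha$ now act by symmetric operators, so $\Ricci_\mu|_\ag$ is strictly negative and the perturbation step of the compact case disappears; and the orthogonality condition (iii) acquires exactly one nontrivial pair $(X^\alpha,\i Y^\alpha)$, where the trace vanishes not term by term but by the global cancellation $\mp n^2$ between the contributions of $z_i^n$ and $z_j^n$ (doubled by $\i z_i^n,\i z_j^n$). That cancellation is the one computation that must be done, and you do it correctly; it is also what makes the paper's Example~\ref{exegl2v1} come out diagonal.

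Two points should be tightened. First, negativity of the diagonal entries $\la\Ricci_\mu Z,Z\ra$ and $\la\Ricci_\mu H_\alpha,H_\alpha\ra$ does not by itself give negative definiteness of the $\ag$-block. But the same formula $\la\Ricci_\mu A,A\ra=-\tr S(\ad A|_\ngo)^2$ holds for \emph{every} $A\in\ag$, and each $\ad A|_\ngo$ is a linear combination of the commuting symmetric operators $\ad Z|_\ngo,\ \ad H_\alpha|_\ngo$, hence symmetric, and nonzero for $A\neq 0$ (because $Z$ acts as the identity on $W_n$ while the $H_\alpha$ act faithfully on $\ngo_\ug$); so $\la\Ricci_\mu A,A\ra<0$ for all $A\neq 0$, which is what is actually needed. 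Second, you should state where $n\ge 2$ enters: it is what guarantees $\pi(X)(V_1)\subset V_2$ for $X\in\{X^\alpha,\i Y^\alpha\}$ (for $n=1$ the images $z_i^{n-1}z_j$ lie back in $V_1$), i.e.\ the hypothesis of Lemma~\ref{lema0}; without it the limit algebra has abelian nilradical and the argument breaks down, exactly as in the paper's remark on the $n=1$ case.
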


In the next section, we will prove a more general result that implies Proposition \ref{Gengl} for
$m \ge 3$ so we will omit the proof. It is worth to point out that no perturbation of the inner product is needed for $m \ge 3$, since the basis
$$
\mathcal{B}=\{Z, H_\alpha, X^\beta, \i Y^\beta: \, \, \alpha \in \Pi, \beta \in \Delta^+ \} \cup \mathcal{B}_1
$$
\noindent (see  (\ref{basewm})), is a basis of eigenvectors of $\Ricci_\nu$ with negative eigenvalues for $m \ge 3, n \ge 2$, though is not nice. Although, these  results are a particular case of Theorem \ref{ssnc} for $m \ge 3$, this approach has the advantage of giving the inner product explicitly. As an example we will go over the example of $\glg(2,\RR)$ acting on $W_2=\mathcal{P}_{2}(\CC^2),$ the space of homogeneous complex polynomials of degree $2$ in $2$ variables seen as a real vector space. Note that for this particular example, the Levi factor of $\lgo= \glg(2,\RR) \ltimes W_2$ is $\slg(2,\RR)$, the only non-compact semisimple Lie algebra which is known to admit no $\Ricci < 0$ inner product (See \cite{M}).

\begin{example}\label{exegl2v1}
 Let $\ggo_0=\glg(2,\RR)$ and let $(W_2,\pi_2)$ be the representation of $\ggo$ on the space of homogeneous complex polynomials of degree $2$ in $2$ variables seen as a real vector space.
As in (\ref{formCom}), let
\begin{equation}\label{sl2}
  H=H_{\alpha_1}=\left[ \begin{smallmatrix} 1 & \\ & -1 \end{smallmatrix}\right], \quad X=X^{\alpha_{1,2}}=\left[ \begin{smallmatrix}  & 1 \\-1 &  \end{smallmatrix}\right], \quad
Y=-\i Y^{\alpha_{1,2}}=\left[ \begin{smallmatrix}  & 1\\ 1 &  \end{smallmatrix}\right],
\end{equation}
\noindent and $Z= \Id$.
We fix the orthonormal basis of $\lgo= \glg(2,\RR) \ltimes W_2 = (\RR^{10},\nu,\ip)$
$$\begin{array}{l}
\qquad \qquad \beta=\{Z, H, X, Y, v_1, v_2, v_3, v_4, v_5, v_6\},\\ \\
\text{ where } \qquad v_1=z_1^2,\; v_2= \i z_1^2,\; v_3=z_1z_2,\; v_4=\i z_1z_2,\; v_5=z_2^2,\;v_6=\i z_2^2.
\end{array}$$
We have
{\small $$
\begin{array}{l}
  \pi_2(Z) = \left[\begin{smallmatrix} 1 && \\ &\ddots&\\&&1\end{smallmatrix}\right], \qquad
\pi_2(H) = \left[\begin{smallmatrix}-2&&&&&\\&-2&&&&\\&&0&&&\\&&&0&&\\&&&&2&\\&&&&&2\end{smallmatrix}
   \right], \\ \\
  \pi_2(X) = \left[\begin{array}{cc:cc:cc}  &  & 1& & & \\ &&&1&& \\ \hdashline  -2& &&& 2& \\ &-2&&&&2 \\ \hdashline  & & -1&&& \\ &&&-1&&  \end{array} \right], \quad
\pi_2(Y) = \left[\begin{array}{cc:cc:cc} &  & -1& & & \\ &&&-1&& \\ \hdashline  -2& &&& -2& \\ &-2&&&&-2 \\ \hdashline  & & -1&&& \\ &&&-1&&  \end{array} \right].
\end{array}
$$}
In this case $V_1$ is the subspace generated by $\mathcal{S}=\{v_1,v_2,v_5,v_6\}$ (see (\ref{DefS})) so we get
that the degeneration is given by $\phi_t \in \Gl(\sg)$
$${\phi_t}|_\ggo= \left[\begin{smallmatrix}1&&&\\&1&&\\&&t&\\&&&t  \end{smallmatrix}\right], \quad
{\phi_t}|_{V_2}= \left[\begin{smallmatrix}t&&&&&\\&t&&&&\\&&t^2&&&\\&&&t^2&&\\&&&&t&\\&&&&&t  \end{smallmatrix}\right],
$$
and hence, the limit $\lgo_\infty = (\RR^{10},\mu,\ip)$
is a solvable Lie algebra. Its nilradical is $\ngo = \Span \{X,Y\} \oplus V_2$ and the center of $\ngo$ is $\zg = \Span \{v_3,v_4 \}$.

Direct calculation shows that if we change the inner product so that
$$\{Z,H,X,Y,\unm v_1,\unm v_2, v_3,  v_4, \unm v_5, \unm v_6 \}$$
is an orthonormal basis, the corresponding Ricci operator is given by
\begin{equation}
\Ricci_\mu = \Diag(-6,-24,-2,-2,-7,-7,-4,-4,-7,-7).
\end{equation}

\end{example}


\begin{remark}{\rm
 As in the $\sug(2)$ case we can show that $\sg = \glg(2,\RR) \ltimes \CC^2,$ that is when
 $\slg(2,\RR)$ acts on $W_1=\CC^2$ seen as a real vector space, also admits an inner
 product with negative Ricci curvatures. This is the analogous of \cite[ Lemma 3.4]{u2}  so as in
 that case we only need to consider a slightly different degeneration and the right
 change of basis and therefore we will just give very few details.

Consider the metric Lie algebra $\lgo_\infty = (\RR^8, \mu, \ip),$ where $\ip$ is the inner product that makes
$$\mathcal{B}=\{Z, H, X, Y, z_1, \i z_1, z_2, \i z_2\}$$ an orthonormal basis and the family $\phi_t$ as in \cite[ Lemma 3.4]{u2}.
Direct calculation shows that
$$ \Ricci_{\mu} = \left[ \begin{smallmatrix}
 -4&&&&&&&\\&-12&&&&&&\\&&-1&1&&&&\\&&1&-1&&&&\\&&&&-5&&&\\&&&&&-5&&\\&&&&&&-3&\\&&&&&&&&-3
\end{smallmatrix}\right].$$
By changing the inner product so that
$$\{Z, H, X+Y, X-Y, z_1, \i z_1, z_2, \i z_2\}$$
\noindent is an orthonormal basis, we get
$$ \Ricci_{\mu} = \Diag (-4,-12,-8,-12,-2,-2,-6,-6),$$
\noindent as desired. Then $\lgo_\infty$ and therefore $\lgo$ both admit an inner product with negative Ricci curvature.
}\end{remark}

\section{A more general Construction.}

In this section, we obtain a generalization of the construction in the previous section in the sense that we consider  more general semidirect products to find examples of non-solvable Lie groups with negative Ricci curvature.
We construct Lie algebras $\ggo= \ag \oplus \rg \oplus \ngo$, where $\rg$ is a semisimple Lie algebra without compact factors, $\ngo$ is a nilpotent ideal and $\ag$ is abelian.
In \cite{LL}, the Ricci operator for homogeneous spaces has been studied. We will use some of their ideas and notation since many of the formulas used there are general.

\begin{definition} In the following, we will denote by $\ggo = \ggo(\ag,\rg,\ngo) = (\ag \oplus \rg) \ltimes \ngo$ a Lie algebra such that
\begin{itemize}
 \item $\rg$  is semisimple with no compact factors,
 \item $\ag$  is abelian,
 \item $\ngo$  is nilpotent,
 \item $[\ag, \rg]=0.$
\end{itemize}
\end{definition}

Fix $\ip$ any inner product on $\ggo$ that makes $\ag \oplus\rg \oplus \ngo$ an orthogonal decomposition.
We note that the mean curvature vector $H$ is orthogonal to $\ngo$ and to $\hg$ so $H \in \ag$.
Since $\ag$ is abelian, $\rg$ is a subalgebra and $\ngo$ is a nilpotent ideal, using formulas from \cite[Lemma 4.4]{LL}, we can show that the Ricci operator of  $(\ggo, \ip)$ is given by

\begin{equation}\label{Ricgennc}
\begin{array}{l}
\langle \Ricci Y, Y\rangle = \langle\Ricci_\hg Y, Y \rangle - \tr S(\ad Y|_\ngo)^2,\\ \\
\langle \Ricci A, A\rangle = - \tr S(\ad A|_\ngo)^2, \\ \\
\Ricci|_\ngo = \Ricci_\ngo - S(\ad H|_\ngo) + \unm \sum [\ad Y_i|_\ngo, (\ad Y_i|_\ngo)^t] + \unm \sum [\ad A_i|_\ngo, (\ad A_i|_\ngo)^t],\\ \\
\langle \Ricci Y, A\rangle = - \tr S(\ad Y|_\ngo) S(\ad A|_\ngo),\\ \\
\langle \Ricci Y, X\rangle = - \tr (\ad Y|_\ngo)^t (\ad_\ngo X),\\ \\
\langle \Ricci A, X\rangle = -  \tr (\ad A|_\ngo)^t (\ad_\ngo X),
  \end{array}
  \end{equation}
where $Y \in \rg$, $A \in \ag$, $X \in \ngo$, $\{Y_i\}$ $\{A_i\}$ are orthonormal basis of $\rg$ and $\ag$ respectively and by $\Ricci_\lgo$ we denote the Ricci operator of the Lie subalgebra $\lgo$ with the restricted inner product.

\begin{theorem}\label{ssnc}
Let $\ggo= \ggo(\ag,\rg,\ngo)$ be as above. If in addition,
\begin{enumerate}
    \item $\ngo$ admits an inner product such that $\ad A|_\ngo$ are normal operators for any $A \in \ag$,
  \item no $\ad A|_\ngo$ has all its eigenvalues purely imaginary,
  \item there exists an element $A_1$ in $\ag$ such that all the eigenvalues of $\ad A_1|_\ngo$ have positive real parts,
  \item\label{4}  $\rg$ admits an inner product with orthogonal Cartan decomposition $\rg= \kg \oplus \pg$ and with $\Ricci <0$,
\end{enumerate}
then $\ggo$ admits an inner product with negative Ricci curvature.
\end{theorem}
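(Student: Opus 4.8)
The plan is to exhibit a two–parameter rescaling of a carefully chosen base inner product whose Ricci operator becomes negative definite in a suitable limit. All the genuine work will be carried out at the base inner product, and the rescaling will only serve, at the end, to let the $\ngo$--block dominate.

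First I would fix the base inner product on $\ggo$: take $\ip$ on $\rg$ as in hypothesis (4), take $\ip$ on $\ngo$ as in hypothesis (1) (so that every $\ad A|_\ngo$ is normal), make $\ag$, $\rg$, $\ngo$ mutually orthogonal, and choose $\ip$ on $\ag$ so that the mean curvature vector $H$ equals $A_1$. The last choice is possible because $H$ is the metric dual of the functional $A\mapsto\tr\ad A|_\ngo$ and, by (3), $\tr\ad A_1|_\ngo=\sum\Re(\Spec\ad A_1|_\ngo)>0$, so $A_1$ lies in the open half–space of realizable mean curvature vectors. Since $\ad A_1|_\ngo$ is normal with eigenvalues of positive real part, $S(\ad A_1|_\ngo)$ is then positive definite; this is the operator that will ultimately make the $\ngo$--block negative. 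Normality also gives, through the trace identity recalled before (\ref{ricsolnor}), that $\langle\Ricci A,X\rangle=0$ and that the abelian commutator term in (\ref{Ricgennc}) vanishes.

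The heart of the argument is to show that $\Ricci$ restricted to $\rg\oplus\ag$ is already negative definite at the base inner product. Indeed, from (\ref{Ricgennc}), for $Y\in\rg$ and $A\in\ag$,
\[
\langle\Ricci(Y+A),Y+A\rangle=\langle\Ricci_\rg Y,Y\rangle-\tr S(\ad Y|_\ngo)^2-2\tr S(\ad Y|_\ngo)S(\ad A|_\ngo)-\tr S(\ad A|_\ngo)^2,
\]
and, since $S(\ad Y|_\ngo)+S(\ad A|_\ngo)=S(\ad(Y+A)|_\ngo)$ and $\tr PQ=\tr QP$, the three trace terms collapse into one square:
\[
\langle\Ricci(Y+A),Y+A\rangle=\langle\Ricci_\rg Y,Y\rangle-\tr S(\ad(Y+A)|_\ngo)^2.
\]
This is strictly negative unless $Y+A=0$: if $Y\ne0$ the first term is negative by (4) and the second is nonpositive, while if $Y=0\ne A$ it reduces to $-\tr S(\ad A|_\ngo)^2<0$ by (1)--(2). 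I expect this completion of the square to be the crux, since a naive block estimate fails: the $\rg$--$\ag$ off–diagonal entries are of the same order as the geometric mean of the two diagonal blocks under any rescaling, so they cannot be made negligible and must instead be absorbed exactly.

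Finally I would introduce the inner products $\ip_{(a,b,c)}$ obtained by scaling $\rg,\ag,\ngo$ by $a^2,b^2,c^2$; by (\ref{chBasis}) computing $\Ricci$ for $\ip_{(a,b,c)}$ amounts to computing it for $\phi\cdot\lb$ with $\phi=\Diag(a,b,c)$ at the base inner product. Writing its matrix in the base orthonormal basis and applying the congruence $\mathcal{R}\mapsto D\mathcal{R}D$ with $D=\Diag(a\,\Id_\rg,b\,\Id_\ag,c\,\Id_\ngo)$, which preserves the signature, produces a symmetric block matrix whose $\rg\oplus\ag$ corner is exactly the fixed negative definite form above, whose $\rg$--$\ngo$ coupling is a fixed block (the $\ag$--$\ngo$ coupling being zero), and whose $\ngo$--block equals $\Ricci_\ngo-u\,S(\ad A_1|_\ngo)+w\,R_0$, where $R_0=\unm\sum[\ad Y_i|_\ngo,(\ad Y_i|_\ngo)^t]$ and $u=c^2/b^2$, $w=c^2/a^2$ are arbitrary positive parameters. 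Keeping $w$ fixed and letting $u\to\infty$ (e.g. $a=c$, $b\to0$) drives the $\ngo$--block to $-\infty$ because $S(\ad A_1|_\ngo)\succ0$, so a Schur complement argument — the corner being fixed and negative definite and the coupling blocks fixed — gives negative definiteness of the whole matrix for $u$ large. Hence $\Ricci<0$ for a suitable $\ip_{(a,b,c)}$, as claimed.
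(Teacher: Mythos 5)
Your proof is correct, but it takes a genuinely different route from the paper's. The paper first degenerates $\ggo$ to the algebra in which $\ngo$ is abelian (via the scaling $\psi_t$ and Proposition \ref{limit}), and then, in the abelian case, builds an inner product whose Ricci operator is literally block diagonal: it averages the inner product on $\ngo$ over the compact dual $\kg+\i\pg$ (this is exactly where the orthogonal Cartan decomposition in hypothesis (4) enters) so that $\kg$ acts skew-symmetrically and $\pg$ symmetrically, which kills both the commutator term $R_0$ and the $\kg$--$\ag$ coupling, and it then kills the remaining $\pg$--$\ag$ coupling by decomposing $\ngo$ into joint eigenspaces of the commuting operators $S(\ad A_i|_\ngo)$ and using that the restriction of a representation of a semisimple algebra to an invariant subspace is traceless. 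You instead absorb the $\rg$--$\ag$ coupling exactly, via the identity $S(\ad Y|_\ngo)+S(\ad A|_\ngo)=S(\ad(Y+A)|_\ngo)$ and completion of the square; you keep the non-abelian terms $\Ricci_\ngo$, $R_0$ and the $\rg$--$\ngo$ coupling, and you dominate them with $-u\,S(\ad A_1|_\ngo)$ through the congruence and a Schur-complement estimate, so no degeneration is needed at all. What your route buys: it is more self-contained (no unitary-trick averaging, no limit of brackets), it exhibits the metric directly on $\ggo$ itself as an anisotropic rescaling of an explicit base metric, and --- notably --- it never uses the orthogonality of the Cartan decomposition, so it actually proves the theorem with hypothesis (4) weakened to ``$\rg$ admits some inner product with $\Ricci<0$''. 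What the paper's route buys: the limit algebra carries a metric with block-diagonal Ricci (a cleaner structural picture), and the argument reuses the degeneration machinery that runs through the whole paper. Both arguments rest on the same formula (\ref{Ricgennc}), the same use of hypotheses (1)--(3) (normality killing the $\ag$-commutator term and the $\ag$--$\ngo$ coupling, (2) making the $\ag$-block definite, (3) making $S(\ad A_1|_\ngo)$ positive definite), and essentially the same normalization of $\ip|_\ag$: your choice $H=A_1$ is the paper's $H=\tr(\ad A_1|_\ngo)A_1$ up to scale.
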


 Some observations before the proof. First, by results of \cite{JP}, it is known that a semisimple Lie group that admits a negative Ricci curved metric can not have any compact factors (see (\ref{4})).

It is easy to see that since $\ag$ is abelian it is enough to check the first hypothesis in a basis.

Also note that if $\rg$ is a non-compact semisimple Lie algebra with Cartan decomposition $\rg=  \kg \oplus \pg$ and $\ip$ is an inner product on $\rg$ such that $\kg$ is orthogonal to $\pg$ (as in (\ref{4})) then $\Ricci(\kg,\pg) = 0$. Indeed, $\Hm=0$ as $\rg$ is unimodular and there exists an inner product on $\rg$ such that $\ad X$ is a symmetric operator  for $X \in \pg$ and $\ad Y$ is skew-symmetric for $Y \in \kg$, then the Killing form satisfies $\langle BX, Y\rangle=0$ for any $X \in \pg$ and $Y\in \kg$. Also, from (\ref{R}) we get that for any orthonormal basis of $\rg$ $\{X_i\}$,
\begin{equation}\label{kortp}
\la MX,Y\ra=-\unm\sum\la [X,X_i],X_j\ra \la [Y,X_i],X_j\ra +\unc\sum\la
[X_i,X_j],X\ra \la
[X_i,X_j],Y\ra.
\end{equation}
Let us chose an $\{X_i\}$, so that the first elements are in $\kg$ and the last ones are in $\pg$. Hence, if $X \in \pg$ and $Y\in \kg$, using that
$$[\kg,\kg] \subset \kg,\; [\kg,\pg] \subset \pg, \; [\pg, \pg] \subset \kg,$$
it is easy to check that all the terms in (\ref{kortp}) vanish and therefore $\Ricci(\kg,\pg) = 0$.

 Also note that if  $\ggo$ is a Lie algebra we can consider its Levi decomposition $\ggo= \rg \oplus \sg$ and decompose the radical $\sg$ as $\sg=\ag \oplus \ngo$ where $\ngo$ is the nilradical of $\ggo$. It is not hard to see that there always exists a complement $\ag$ which satisfies $[\ag, \rg]=0$, therefore that hypothesis is not so restrictive.  In fact, since $\sg$ is an ideal and $\rg$ is non-compact semisimple Lie algebra, there exists an inner product $\ip$ on $\sg$ and a basis of $\rg$ $\beta$, such that $\ad Y: \sg \to \sg$ are symmetric or skew-symmetric operators for any $Y \in \beta$ (see (\ref{innerpro})).  Also, for any $Y \in \beta$, $\ad Y$ is a derivation of $\sg$ and hence $\ad Y(\sg) \subset \ngo$ (see \cite[Lemma 2.6]{GorWi}). Let $\ag$ be the orthogonal complement of $\ngo$ and hence, since $\ngo$ is an ideal, $\ad Y(\ngo) \subset \ngo$ and therefore for any $A \in \ag, X \in \ngo$
$$\langle [Y, A], X \rangle = \pm \langle A, [Y, X] \rangle = 0,$$
for any $Y \in \beta$ and therefore for any $Y \in \rg$.

Finally, note that to get (1) it is enough to have that $\ad A|_\ngo$ are semisimple operators for any $A \in \ag$. Indeed, if $\ad A|_\ngo$ are semisimple operators for any $A \in \ag$ since they commute, there exists a basis of $\ngo^\CC$, $\beta$, of common eigenvectors of $\{\ad A|_\ngo,\, A \in \ag\}$. Then it is easy to see that they are normal operators of $(\ngo, \ip_0)$ where $\ip_0$ is the real part of the inner product that makes $\beta$ an orthonormal basis of $\ngo^\CC$.

\begin{proof}

We will consider first the case when $\ngo$ is abelian.
Let $\ip$ be an inner product on $\ggo$ such that the decomposition $\ag \oplus \rg \oplus \ngo$ is orthogonal,  $\ip|_\rg$ is an inner product with negative Ricci curvature, $\rg=  \kg \oplus \pg$ is the orthogonal Cartan decomposition and $\ip|_\ngo$ is the inner product given in (1). Note that since $\ngo$ is abelian,  $\Ricci(\ag,\ngo)=0$ and  $\Ricci(\rg,\ngo)=0$ (see (\ref{Ricgennc})).

Since $[\rg, \ag]=0$ we can also assume with no loss of generality, that the elements in $\kg$ act on $\ngo$ by skew-symmetric operators and $\pg$ acts by symmetric ones. Indeed, let $R$ be the complex simply connected Lie group with Lie algebra $\rg^\CC$ and let $R_1$ be the connected Lie subgroup of $R$ with Lie algebra $\rg_1 = (\kg + \i \pg)$. Since $\rg_1$ is compact, for any $\ip_0$  hermitian form on $\ngo^\CC$  we get that
 \begin{equation}\label{innerpro}
   \langle X, X' \rangle_1 = \int_{R_1} \langle \pi(h) (X), \pi(h)  (X')\rangle_0\; dh,
 \end{equation}

 \noindent defines an $R_1$-invariant hermitian form on $\ngo^\CC$, where $\pi$ is the representation of $R$ on $\ngo^\CC$ such that $d \pi = \ad|_\ngo$. Using that $R_1$ is connected and the fact that $[\rg_1, \ag]=0$, we get that $\ad A$ commutes with $\ad(Y)=d \pi(Y)$ for any $Y \in \rg_1$ and therefore with $\pi(h)$ for any $h \in R_1$. Hence, if $\ad A$ is normal with respect to $\ip_0$, its transpose (with respect to $\ip_0$), $(\ad A)^t_0$ also commutes with $\pi(h)$. Using this in (\ref{innerpro}) it is easy to see that the transpose of $\ad A$ is the same for both hermitian forms $\ip_1$, $\ip_0$ and consequently if $\ad A$ is normal with respect to $\ip_0$, it is also a normal operators of $(\ngo^\CC, \ip_1)$. Finally, consider the real part of $\ip_1$, $\ip$ which is an inner product on $\ngo$ with the desired properties.


We therefore get form (\ref{Ricgennc})
\begin{equation}\label{Richomneg}
\begin{array}{l}
\langle \Ricci Y, Y\rangle = \langle \Ricci_\hg Y, Y\rangle - \tr S(\ad Y|_\ngo)^2, \\ \\
\langle \Ricci A, A\rangle =- \tr S(\ad A|_\ngo)^2,  \\ \\
\Ricci|_\ngo = - S(\ad H|_\ngo) + \unm \sum [\ad Y_i|_\ngo, (\ad Y_i|_\ngo)^t], \\ \\
\langle \Ricci Y, A\rangle = - \tr S(\ad Y|_\ngo) S(\ad A|_\ngo),\\ \\
\langle \Ricci Y, X\rangle = 0,  \qquad  \langle \Ricci A, X\rangle = 0,
  \end{array}
  \end{equation}
 for $Y \in \rg$, $A \in \ag$, $X \in \ngo$.
Chose an orthonormal basis of $\rg$, $\{Y_1, \dots,Y_r\}$ so that the $Y_j \in \kg$ for $j \le m$ and $Y_j \in \pg$ for $j > m$. Therefore, $[\ad Y_i|_\ngo, (\ad Y_i|_\ngo)^t] = 0$.

Let $\{A_1, \dots, A_k\}$ be a basis of $\ag$ so that $A_1$ is the element as in the statement
and $\tr \ad A_i=0$ for all $i \ge 2$ and take the inner product that makes this an orthonormal basis of $\ag$. Note that up to now the inner product on $\ag$ has no conditions. From this, it is easy to see that $H = \tr ({\ad A_1}|_\ngo) A_1.$

We note that by the choice of the basis, $\langle \Ricci Y_j, A_k\rangle = 0$ for any $j \le m$.
Also, since $\{(\ad A_i|_\ngo), 1 \le i \le r\}$ is a set of normal operators that commutes with each other, $(\ad A_i|_\ngo)$ commutes with $(\ad A_j|_\ngo)^t$ for each $i,j$. Thus,  $\{S(\ad A_i|_\ngo)\}$ is a set of symmetric operators that commutes with each other and therefore there exists $\beta=\{X_1, \dots X_n\}$ a basis of $\ngo$ of common eigenvectors of $\{S(\ad A_i|_\ngo), 1 \le i \le r\}$. Hence, consider a decomposition $\ngo = \ngo_1 \oplus \dots \oplus \ngo_p$ such that $S(\ad A_i|_{\ngo_j}) = a_{ij} \, Id_{\ngo_j}$, $a_{ij} \in \RR$.
Now, since $[Y, A]=0$, $\ad Y|_\ngo$ preserves the subspaces $\ngo_j$ and therefore we obtain that, for any $j > m,$
$$ \begin{array}{ll}
 \langle \Ricci Y_j, A_k\rangle & = \tr S(\ad {Y_j}|_\ngo)S(\ad {A_k}|_\ngo) = \sum  \tr(\ad {Y_j}|_{\ngo_l})S(\ad {A_k}|_{\ngo_l})\\ \\
  & = \sum  \tr(\ad {Y_j}|_{\ngo_l})(a_{kl} \,\Id_{\ngo_l}) = 0,
\end{array}
$$
since $\hg$ is semisimple.
Thus, from (\ref{Richomneg})
$$\begin{array}{l}
\langle \Ricci Y, Y\rangle = \langle \Ricci_\hg Y, Y\rangle - \tr S(\ad Y|_\ngo)^2,  \\ \\
\langle \Ricci A, A\rangle =- \tr S(\ad A|_\ngo)^2,  \\ \\
\Ricci|_\ngo = - \tr({\ad A_1}|_\ngo) S(\ad A_1|_\ngo), \\ \\
\langle \Ricci Y, A\rangle = 0, \quad \langle \Ricci Y, X\rangle = 0,  \quad  \langle \Ricci A, X\rangle = 0,
\end{array}
$$
is negative definite, as was to be shown.

If $\ngo$ is not abelian, for each $t>0$ consider $\psi_t \in \glg(\ggo)$ such that
  $$
  {\psi_t}|_{\ag \oplus \rg}=\Id, \qquad {\psi_t}|_{\ngo}=t \,\Id.
  $$
  It is easy to check that $\lb_t = \psi_t \cdot \lb$ is given by
\begin{equation}\label{degabel}
\begin{array}{l}
    [X_1,X_2]_t=[X_1,X_2] \quad  \text{ for } X_i \in \ag\oplus \rg,\; i=1,2, \\ \\

    [X_1,X_2]_t=\tfrac{1}{t}[X_1,X_2] \quad \text{ for } X_i \in \ngo,\; i=1,2,\\ \\

     [X_1,X_2]_t=[X_1,X_2] \quad \text{ for } X_1 \in \ag\oplus \rg,\; X_2 \in \ngo.
  \end{array}
\end{equation}

  In the last two equations we have used that $\ngo$ is an ideal.
  Hence, $\displaystyle{\lim_{t \to \infty}} \lb_t =\mu_0$ is well-defined and is given by
    $$\begin{array}{l}
    \mu_0(X_1,X_2)=[X_1,X_2],  \, X_1 \in \ag \oplus \rg, X_2 \in \ggo,\quad   \mu_0(X_1,X_2)=0,  \, X_i  \in \ngo.
    \end{array} $$
Therefore, the limit Lie algebra satisfies the same conditions as in the statement and $\ngo$ is now abelian. Using the previous results, the limit Lie algebra admits an inner product with negative Ricci curvature and therefore, by Proposition \ref{limit} so does $\ggo$.
\end{proof}

\begin{remark}
 We note that since $\rg$ is semisimple with no compact factors, these examples are completely different from examples arising from Theorem $\ref{theTheo}$. On the other hand,  when $\ag=\RR Z$ is acting as a multiple of the identity and we consider one of the inner products on $\slg(n,\RR)$ for $n \ge 3$ given in \cite{DL} we get the results of the previous section for any real representation. Recall that in \cite{DLM} it is shown that most of the simple non-compact Lie algebras admits an inner product satisfying the properties in Theorem \ref{ssnc} and from there we get a lot of examples.
\end{remark}

Finally, coming back to the compact case, we can use the same idea as in the previous theorem  to get examples with a non-abelian $\ngo$. Since we only have a complete description in the case when $\ggo=\sug(2)$, we will only state the result in that case. Note that to get a Lie algebra, $\sug(2)$ should act by derivations on $\ngo$.

\begin{theorem}
  Let $\ggo= (Z\RR \oplus \sug(2)) \ltimes \ngo$ be a Lie algebra where $\ngo$ is any nilpotent Lie algebra and $[Z,\sug(2)]=0$. Let $\pi=ad|_{\sug(2)}$ acting on $\ngo$ and let $\ngo=\ngo_1\oplus \dots \oplus \ngo_k$ be the decomposition of $\ngo$ in irreducible components for $\pi$. If $\pi$ is not trivial and $Z$ acts in each $\ngo_i$ as a positive multiple of the identity, then $\ggo$ admits an inner product with negative Ricci curvature.
\end{theorem}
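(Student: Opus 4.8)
The plan is to reach the solvable setting by degenerating the bracket twice and then to read off negativity from the Ricci formula (\ref{ricsolnor}) for a metric solvable Lie algebra whose abelian factor acts by normal operators. First I would abelianize $\ngo$ using exactly the idea in the proof of Theorem \ref{ssnc}: for $t>0$ let $\psi_t\in\glg(\ggo)$ act as the identity on $\RR Z\oplus\sug(2)$ and as $t\,\Id$ on $\ngo$. Since $\ngo$ is an ideal and $\RR Z\oplus\sug(2)$ a subalgebra, the brackets transform as in (\ref{degabel}), so $\mu_0=\lim_{t\to\infty}\psi_t\cdot\lb$ is well defined and equals $(\RR Z\oplus\sug(2))\ltimes V$ with $V=\ngo$ now abelian, carrying the same $\sug(2)$-module structure $\pi$ and the same ($Z$ acting as a positive multiple of the identity on each irreducible) action. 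By Proposition \ref{limit} it suffices to produce a Ricci-negative metric on this limit. Write $V=\bigoplus_i\ngo_i$ into $\sug(2)$-irreducibles and let $c_i>0$ be the scalar by which $Z$ acts on $\ngo_i$.

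Next I would put this in the form of Theorem \ref{theTheo}. For $\sug(2)$ there is a single positive root $\alpha$, with basis $H^\alpha,X^\alpha,Y^\alpha$ and brackets (\ref{lbroot}); fix the real part of a Hermitian inner product on $V$ for which $\sug(2)$ acts skew-symmetrically, so that $\pi(H^\alpha)$ is skew and multiplication by $\i$ is skew. Decompose each $\ngo_i$ into eigenplanes of the skew operator $\pi(H^\alpha)$, and let $V_1$ be the sum, over the summands of highest weight $\ge 2$, of the extreme (largest $|\text{weight}|$) eigenplanes, and $V_2$ its orthogonal complement (which contains every standard $2$-dimensional summand). Since $X^\alpha,Y^\alpha$ shift the $\pi(H^\alpha)$-weight, they carry each top eigenplane into the adjacent lower one, giving $\pi(X^\alpha)(V_1),\pi(Y^\alpha)(V_1)\subset V_2$; this is the hypothesis of Lemma \ref{lema0}, and conditions (i)--(ii) of Theorem \ref{theTheo} hold by construction as soon as some summand has highest weight $\ge 2$. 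Condition (iii), $\tr\pi(Y^\alpha)_{|V_1}^t\pi(X^\alpha)_{|V_1}=0$, follows from the $\sug(2)$ weight structure exactly as in the polynomial examples of Section 4, since on a top weight vector $w$ one has $\pi(Y^\alpha)w=-\i\,\pi(X^\alpha)w$ and $\la u,\i u\ra=0$ for the chosen inner product. Running the degeneration and the Ricci computation of Theorem \ref{theTheo} verbatim, the only change produced by $Z$ acting with scalars $c_i$ rather than $1$ is that the mean curvature vector becomes $c\,Z$ with $c=\tr\ad Z|_V>0$, so the diagonal term $-m\,\Id$ on each $\ngo_i$ is replaced by $-c\,c_i\,\Id$; this is still negative, and for $\rho$ small the solvable limit has $\Ricci<0$, hence so does $\ggo$.

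It remains to account for the standard $2$-dimensional summands. As long as at least one summand has highest weight $\ge 2$, the standard ones may simply be placed in $V_2$: the degeneration of Lemma \ref{lema0} sends them to central directions of the nilradical on which $Z$ acts by the positive scalar $c_i$, so by (\ref{ricsolnor}) they contribute $-c\,c_i\,\Id$ to $\Ricci$ and do no harm. The only genuinely separate situation is when $\ngo$ is a sum of standard modules alone: then $V_1=0$, condition (ii) fails, and the degeneration pushes $X^\alpha,Y^\alpha$ into the center of the nilradical, where $\ad(\RR Z\oplus\RR H^\alpha)$ has purely imaginary spectrum and \cite[Theorem 2]{NN} obstructs negativity. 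This is precisely the phenomenon resolved for $\sug(2)$ in \cite[Lemma 3.4]{u2} by a modified degeneration and change of basis, which I would apply (adapted to the scalars $c_i$) to conclude. I expect the main obstacle to be exactly this last bookkeeping: choosing the weight-space decomposition $V=V_1\oplus V_2$ uniformly across inequivalent irreducibles and verifying that the two degenerations (the one of Lemma \ref{lema0} for the higher summands and the one of \cite{u2} for a possible standard part) can be performed with respect to a single inner product so that all off-diagonal Ricci terms still vanish and the diagonal blocks are simultaneously negative definite.
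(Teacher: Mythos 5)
Your strategy is correct, and its first half is literally the paper's argument: the paper also abelianizes $\ngo$ with the degeneration (\ref{degabel}) and invokes Proposition \ref{limit}. Where you diverge is in the second step. The paper disposes of the resulting algebra $(\RR Z\oplus\sug(2))\ltimes V$, with $V$ abelian, in one sentence, by following ``the same proof given in \cite{u2}'', citing \cite[Remark 3.13]{u2} to account for $Z$ acting by different positive scalars $c_i$; that reference already contains all cases, including sums of standard modules. You instead re-derive this step inside the present paper's framework, taking $V_1$ to be the extreme weight planes of the summands of highest weight $\ge 2$ and feeding this into Theorem \ref{theTheo}. This is legitimate and more self-contained, but note that it is not a verbatim application: Lemma \ref{lema0} and Theorem \ref{theTheo} assume $\ad Z|_V=\Id$, so you are really proving a (true) extension of Theorem \ref{theTheo}. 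Your identification of the only changes --- $\Hm=cZ$ with $c=\tr\ad Z|_V$, and $-m\,\Id$ replaced by $-c\,c_i\,\Id$ on $\ngo_i$ --- is right; the block structure and the vanishing of the off-diagonal terms survive because $\ad Z$ is symmetric, preserves each $\ngo_i\cap V_1$ and $\ngo_i\cap V_2$, and $\tr\ad H^\alpha|_{\ngo_i}=0$. For the case in which every non-trivial summand is standard, you and the paper end up leaning on the same source: your appeal to \cite[Lemma 3.4]{u2} (adapted to the scalars $c_i$), together with the observation that \cite[Theorem 2]{NN} obstructs the naive degeneration there, is exactly the mechanism hidden in the paper's citation of \cite{u2}.

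One imprecision should be repaired. You justify condition (iii) of Theorem \ref{theTheo} by writing $\pi(Y^\alpha)w=-\i\,\pi(X^\alpha)w$ on top weight vectors and using that ``multiplication by $\i$ is skew''. This presupposes an invariant complex structure on $V$, which exists on quaternionic-type summands (odd highest weight) but not on real-type ones (even highest weight, e.g.\ the adjoint representation on $\RR^3$), so the phrase ``real part of a Hermitian inner product on $V$'' does not parse in general. The conclusion $\tr \pi(Y^\alpha)_{|_{V_1}}^t\pi(X^\alpha)_{|_{V_1}}=0$ is nevertheless true for any invariant inner product with orthogonal weight planes: in each top rotation plane choose an orthonormal basis $\{w,w'\}$ with $w'$ proportional to $\pi(H^\alpha)w$; a direct computation in the complexification gives $\pi(X^\alpha)w'=-\pi(Y^\alpha)w$ and $\pi(Y^\alpha)w'=\pi(X^\alpha)w$, so the two diagonal contributions $\la \pi(X^\alpha)w,\pi(Y^\alpha)w\ra$ and $\la \pi(X^\alpha)w',\pi(Y^\alpha)w'\ra$ cancel. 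With that pairing argument substituted for the complex-structure identity, your case distinction and the rest of the proof go through.
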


\begin{proof}
  Let $(\ggo, \lb)$ be the Lie algebra as defined above and endow it with the inner product such that $\lVert Z \rVert=1$ and $Z\RR \oplus \sug(2) \oplus \ngo$  is an orthogonal decomposition. Let $\psi_t \in \glg(\ggo)$ be as in (\ref{degabel}) where $\ag = \RR Z$ and $\rg=\sug(2)$. Hence, as it was shown in the previous theorem,  $\mu_o=\displaystyle{\lim_{t \to \infty}} \psi_t \cdot \lb$ is well defined and it is given by
    $$\begin{array}{l}
    \mu_0(X_1,X_2)=[X_1,X_2],  \, X_1 \in  \RR Z \oplus \sug(2), X_2 \in \ggo,\quad   \mu_0(X_1,X_2)=0,  \, X_i  \in \ngo.
    \end{array} $$
   Let $\pi = \ad|_{\sug(2)}$ acting on $\ngo$ and decompose the linear space $\ngo$ in irreducible components for the action of $\pi$. Note that $Z$ acts as a positive multiple of the identity in each $\ngo_i$.  Now we can follow the same proof given in \cite{u2} with a few little differences since the mean curvature vector is different (see \cite[Remark 3.13]{u2}).
  \end{proof}

\end{document}